\theoremstyle{plain}
\newtheorem{theorem}{\bf Theorem}[section]
\newtheorem{lemma}[theorem]{\bf Lemma}
\theoremstyle{definition}
\newtheorem{definition}[theorem]{\bf Definition}
\newcommand{\N}{\mathbb N}
\newcommand{\Z}{\mathbb Z}
 \DeclareMathOperator{\ind}{ind}
 \DeclareMathOperator{\ord}{ord}
 \DeclareMathOperator{\supp}{supp}
\numberwithin{equation}{section}
\begin{document}

\title[]{On the unsplittable minimal zero-sum  sequences over finite cyclic groups of prime order}

\author{Jiangtao Peng}
\address{College of Science, Civil Aviation University of China, Tianjin 300300, P.R. China} \email{jtpeng1982@aliyun.com}

\author{Fang Sun}
\address{College of Science, Civil Aviation University of China, Tianjin 300300, P.R. China} \email{sunfang2005@163.com}

\begin{abstract}
Let $p > 155$ be a prime and let $G$ be a  cyclic group of order $p$. Let $S$ be a minimal zero-sum sequence with elements over $G$, i.e., the sum of elements in $S$ is zero, but no proper nontrivial subsequence of $S$ has sum zero. We call $S$ is unsplittable, if there do not exist $g$ in $S$ and $x,y \in G$ such that $g=x+y$ and $Sg^{-1}xy$ is also a minimal zero-sum sequence. In this paper we show that   if $S$ is an  unsplittable minimal zero-sum sequence of  length  $|S|= \frac{p-1}{2}$, then $S=g^{\frac{p-11}{2}}(\frac{p+3}{2}g)^4(\frac{p-1}{2}g)$ or $g^{\frac{p-7}{2}}(\frac{p+5}{2}g)^2(\frac{p-3}{2}g)$. Furthermore, if $S$ is a minimal zero-sum sequence with $|S| \ge \frac{p-1}{2}$, then $\ind(S) \le 2$.
\end{abstract}

\subjclass[2000]{Primary 11B50, 11P70, 20K01. \\ Key words
and phrases: minimal zero-sum  sequence, index of sequences. }
\maketitle

\section{Introduction and Main Results}

Let $G$ be a finite abelian group. The Davenport constant $\mathsf D(G)$ is the smallest integer $\ell \in \N$ such that every sequence $S$ over $G$ of length $|S| \ge \ell$ has a zero-sum subsequence. The studies of the Davenport constant $-$ together with the famous Erd\H{o}s-Ginzburg-Ziv Theorem $-$ is considered as a starting point in zero-sum theory, and it has initiated a huge variety of further research (more information can be found in the surveys \cite{Caro96,GG06,Ge09a}, for recent progress see \cite{GLP14,GG2013,Gryn2013,YZ10}).

The associated inverse problem of Davenport constant studies for the structure of  sequences of length strictly smaller than $\mathsf D(G)$ which do not have a zero-sum subsequence. The index of a sequence is a crucial invariant in the investigation of (minimal) zero-sum sequences (resp. of zero-sum free sequences) over cyclic groups. Recall that the index of a sequence $S$ over $G$ is defined as follows.
\begin{definition}\cite[Definition 5.1.1]{Ge09a}
\begin{itemize}
\item[1.] Let $g \in G$ be a non-zero element with $\ord(g)=n< \infty$. For a sequence  $S=(x_1g)\cdot\ldots\cdot(x_lg)$ over $G$, where $l\in \N_0$ and $1\le x_1, \ldots, x_l \le n$, we define $\|S\|_g=\frac{x_1+\cdots+x_l}{n}$ to be the {\it $g$-norm} of $S$.

\item[2.] Let $S$ be a sequence for which $\langle \supp(S) \rangle \subset G $ is a nontrivial finite cyclic group. Then we call $\ind(S)=\min\{\| S \|_g \,|\,g\in G \, \mbox{with}\,\, \langle \supp(S) \rangle=\langle g \rangle\}$ the {\it index } of $S$.

\item[3.] Let $G$ be a finite cyclic group. $\mathsf{I}(G)$ denotes the smallest integer $l \in \N$ such that every minimal zero-sum sequence $S$ of length $|S| \ge l$ has $\ind(S)=1$.
\end{itemize}
\end{definition}
\noindent Clearly, S has sum zero if and only if ind(S) is an integer. There are also slightly different
definitions of the index in the literature, but they are all equivalent (see Lemma 5.1.2 in~\cite{Ge09a}).

The index of a sequence   was named by Chapman, Freeze and Smith \cite{CFS99}. It was first addressed by Kleitman-Lemke (in the conjecture \cite[page 344]{LK}), used as a key tool by Geroldinger (\cite[page 736]{Ge90}),  and then investigated by Gao \cite{Gao00} in a systematical way. Since then it has received a great deal of attention (see for examples \cite{CS05, GG09, GLPPW11, LP13, LPYZ10,  PL13, Po04, SX13, SXL14,  Xia13, XLP14, XS13,  YZ11}).

To investigated the index of long minimal zero-sum sequences, Gao \cite{Gao00} introduced the invariant $\mathsf{I}(G)$. The precise value of $\mathsf{I}(G)$   has been determined independently by Savchev and  Chen\cite{SC2007}, and by  Yuan\cite{Yuan2007} in 2007.

\begin{theorem}\cite{SC2007,Yuan2007}\label{theorem of sc and yuan}
Let $G$ be a finite cyclic group of order $n$. Then $\mathsf{I}(G)=1$ if $n\in \{1,2,3,4,5,7\}$, $\mathsf{I}(G)=5$ if $n=6$, and $\mathsf{I}(G)=\lfloor \frac{n}{2} \rfloor +2$ if $n \ge 8.$
\end{theorem}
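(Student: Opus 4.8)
The plan is to establish the two inequalities $\mathsf{I}(G) \le \lfloor n/2\rfloor + 2$ and $\mathsf{I}(G) \ge \lfloor n/2\rfloor + 2$ for $n \ge 8$, and to dispose of the small orders $n \le 7$ by separate finite analysis. Throughout I identify $G$ with $\Z/n\Z$ and, for a fixed generator $g$, encode a sequence by its multiset of coefficients $x_1,\dots,x_\ell \in \{1,\dots,n-1\}$ with $S=\prod_i(x_ig)$. The reformulation I will use is that $S$ is a minimal zero-sum sequence iff $\sum_i x_i \equiv 0 \pmod n$ while no proper nonempty sub-multiset sums to $0 \bmod n$, in which case $\|S\|_g=\tfrac1n\sum_i x_i$ is a positive integer, necessarily $\ge 1$. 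Consequently $\ind(S)=1$ means precisely that some generator can be chosen so that $\sum_i x_i = n$.

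For the lower bound $\mathsf{I}(G)\ge\lfloor n/2\rfloor+2$ it suffices to exhibit, for each $n\ge 8$, one minimal zero-sum sequence of length $\lfloor n/2\rfloor+1$ and index $\ge 2$. I would write down explicit families according to the parity of $n$; for example over $C_8$ the coefficient multiset $\{2,2,2,3,7\}$ gives a minimal zero-sum sequence of length $5=\lfloor 8/2\rfloor+1$ whose $g$-norm across the four generators equals $2$ or $3$, hence index $2$, and analogous sequences of the shape ``several copies of a small element together with two suitably chosen larger elements'' work for general $n$. Each candidate requires only checking (i) minimality, that no proper sub-multiset sums to $0\bmod n$, and (ii) that multiplying the coefficient multiset by every unit $u\in(\Z/n\Z)^\times$ gives total sum $\ge 2n$. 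This is elementary, but the choice must be made with care, since naive ``one dominant element'' sequences collapse to index $1$ under a suitable unit.

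For the upper bound the strategy is to reduce to a structure theorem for long zero-sum free sequences. Let $S$ be minimal zero-sum with $|S|=\ell\ge\lfloor n/2\rfloor+2$, and delete one element to obtain a zero-sum free sequence $S'$ of length $\ell-1\ge\lfloor n/2\rfloor+1$. The key input is the Savchev--Chen structure theorem: a zero-sum free sequence over $C_n$ of length at least $\lfloor n/2\rfloor+1$ admits a generator $g$ and a representation $S'=\prod_i(x_ig)$ with positive integers $x_i$ satisfying $\sum_i x_i\le n-1$. Granting this, since $\sigma(S')=(\sum_i x_i)g$ and $S$ is zero-sum, the deleted element must equal $(n-\sum_i x_i)g$ with $n-\sum_i x_i\ge 1$; writing $S$ over this same generator then yields coefficient sum exactly $n$, so $\|S\|_g=1$ and $\ind(S)=1$. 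This threshold is tight: removing an element from the index-$2$ examples above produces a zero-sum free sequence of length $\lfloor n/2\rfloor$ to which the structure theorem must fail, which is consistent.

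The main obstacle is the structure theorem itself, which is the genuine combinatorial heart of the argument. I would prove it by induction on $n$: one first argues that $\supp(S')$ generates $C_n$ (a sequence confined to a proper subgroup of order $d\mid n$, $d<n$, has length $\le d-1\le n/2-1$, too short), and then studies the set of subsequence sums $\Sigma(S')$, exploiting that zero-sum freeness makes the partial sums of any ordering distinct and avoid $0$, together with lower bounds on the number of distinct subsequence sums, to force the coefficients into a short interval after passing to an index-minimizing generator. The delicate point is controlling partial sums under reordering so as to produce a representation with $\sum_i x_i\le n-1$; this is exactly where the bound $\ell\ge\lfloor n/2\rfloor+1$ is used and where the estimates are tight. (Alternatively, one may follow Yuan's more self-contained route, estimating $\sum_i x_i$ directly for the index-minimizing generator and deriving a contradiction from $\ind(S)\ge 2$ once $\ell$ exceeds $\lfloor n/2\rfloor+1$.) Finally the small orders are separate: for $n\in\{1,2,3,4,5,7\}$ one verifies that every minimal zero-sum sequence already has index $1$, so $\mathsf{I}(G)=1$; and for $n=6$ one exhibits a length-$4$ index-$2$ example, e.g.\ coefficients $\{1,3,4,4\}$, and checks by inspection that all minimal zero-sum sequences of lengths $5$ and $6$ (the only longer ones, since $\mathsf D(C_6)=6$) have index $1$, giving $\mathsf{I}(G)=5$.
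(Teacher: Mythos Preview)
The paper does not prove this theorem at all: it is quoted verbatim as a known result from Savchev--Chen and Yuan, with no argument supplied. There is thus no ``paper's own proof'' to compare against; the result is used as a black box in the proof of Theorem~\ref{mainresult2}.

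That said, your sketch is a faithful outline of how the original references establish the result. The upper bound really does reduce to the Savchev--Chen structure theorem for long zero-sum free sequences over $C_n$, and your reduction (delete one term, apply the structure theorem to the remaining zero-sum free sequence of length $\ge \lfloor n/2\rfloor+1$, then reinsert the deleted term with the correct small coefficient) is exactly the mechanism. You are also right that the structure theorem is the substantive content and that everything else is bookkeeping; your parenthetical remark about Yuan's alternative direct argument is accurate. Your explicit examples check out: $\{2,2,2,3,7\}$ over $C_8$ is minimal zero-sum of length $5$ with index $2$, and $\{1,3,4,4\}$ over $C_6$ has index $2$. One small caution: the phrase ``analogous sequences \dots\ work for general $n$'' hides a genuine case split by parity, and the constructions one actually uses in the literature (e.g.\ $g^{(n-5)/2}\bigl(\tfrac{n+3}{2}g\bigr)^2\bigl(\tfrac{n-1}{2}g\bigr)$ for odd $n$, and $(2g)^{n/2-1}(x_1g)((n+2-x_1)g)$ for even $n$) are more specific than ``several copies of a small element together with two suitably chosen larger elements''; your $C_8$ example does not extend by simple scaling. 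If you were to write this up in full, that is where precision would be needed.
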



Let $S$ be a minimal zero-sum (resp. zero-sum free) sequence of elements over an abelian group $G$. We say that $S$ is {\it splittable} if there exists an element $g \in \supp(S)$ and two elements $x,y \in  G $ such that $x+y=g$ and $Sg^{-1}xy$ is a minimal zero-sum (resp. zero-sum free) sequence as well; otherwise we say that S is {\it unsplittable}.

Let $S$ be a  minimal zero-sum sequence of length $\mathsf{I}(G)-1$ over a finite cyclic group $G$. If $S$ is splittable, it is easy to check that $\ind(S)=1$. If $S$ is unsplittable, Gao \cite{Gao00} conjectured that $ \ind(S)=2$.  In 2010, Xia and Yuan \cite {XY2010} showed that  Gao's conjecture is true when $n$ is odd, and false when $n$ is even.

\begin{theorem}\cite[Theorem 3.1]{XY2010}\label{theorem of XY2010}
Let $S$ be an unsplittable minimal zero-sum sequence of length $|S|=\mathsf{I}(G)-1$ over a finite cyclic group $G$. We have:
\begin{itemize}
\item[(1)] If $n $ is odd, then $S=g^{\frac{n-5}{2}} (\frac{n+3}{2}g)^{2} (\frac{n-1}{2}g)$ when $n \ge 9$ and $S=g \cdot (3g)^2 \cdot (4g) \cdot (7g)$ when $n=9$. Moreover $\ind(S)=2$.
\item[(2)] If $n$ is even,  then either $S=(2g)^{\frac{n}{2}-1}(x_1g)((n+2-x_1)g),$ where $2 \nmid x_1, 1 < x_1 < n, \ x_1 \neq n+2-x_1$ or $S=g^t (\frac{n}{2}g)((1+\frac{n}{2})g)^{2\ell}$, where $t, \, l$ are positive integers with $t+2\ell =\frac{n}{2}$. Moreover $\ind(S)\ge 2$.
\end{itemize}
\end{theorem}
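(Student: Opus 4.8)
The plan is to separate the easy structural input---that unsplittability forces the index up---from the harder combinatorial classification of the resulting height multisets. The first observation is a splitting lemma: if $S$ is a minimal zero-sum sequence with $\ind(S)=1$ and $|S|<n$, choose a generator $g$ with $S=\prod_{i=1}^{\ell}(x_ig)$, $1\le x_i$ and $\sum_i x_i=n$. Since $\ell=|S|<n$, not all $x_i$ equal $1$, so some $x_j\ge 2$; writing $x_jg=g+((x_j-1)g)$ produces $S(x_jg)^{-1}\,g\,((x_j-1)g)$, whose heights are again positive and sum to exactly $n$, so every proper nonempty subsequence has sum in $[1,n-1]$ and is nonzero. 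Hence this is a minimal zero-sum sequence and $S$ is splittable. Consequently an unsplittable minimal zero-sum sequence of length $\mathsf I(G)-1=\lfloor n/2\rfloor+1<n$ must satisfy $\ind(S)\ge 2$, which already yields the index assertion in case (2).

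Next I would translate unsplittability into subset-sum conditions on the heights. Fixing a generator $g$ realizing $\ind(S)=d$, write $S=\prod_{i=1}^{\ell}(x_ig)$ with $\sum_i x_i=dn$; minimality says precisely that no proper nonempty sub-multiset of $\{x_1,\dots,x_\ell\}$ has sum divisible by $n$. The splits to test are the unit splits $x_jg\mapsto(g,(x_j-1)g)$ for $x_j\ge 2$, together with their wraparound variants $x_jg\mapsto(ag,bg)$ where $a+b=x_j+n$ and $1\le a,b\le n$. Unsplittability forces the modified multiset to acquire a proper nonempty sub-multiset of sum $\equiv 0\pmod n$; since $S$ itself has none, such a sum must use one of the two newly created heights, and using both reduces (for proper subsets) to the minimality of $S$. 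Thus for each $j$ with $x_j\ge 2$ there is a subset $T\subseteq\{x_i:i\ne j\}$ with $\sum T\equiv -1\pmod n$ or $\sum T\equiv 1-x_j\pmod n$. These congruences are extremely rigid for a multiset of only $\lfloor n/2\rfloor+1$ positive heights, and that rigidity is what pins down the structure.

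For odd $n$ I would first upgrade $\ind(S)\ge 2$ to $\ind(S)=2$. The expected mechanism is that, absent an element of order two, a unit split of a largest height cannot be blocked once $d\ge 3$: the forced congruence above cannot be solved inside a multiset this short while respecting the parity constraints imposed by $\sum_i x_i=dn$ with $n$ odd, so a valid split exists, a contradiction. Once $d=2$, minimality reads ``no proper subset sums to $n$'', and I would analyze the largest height together with the multiplicity of the height $1$: testing the unit split of the largest height forces a pairing $u+v=n$ among the heights, and propagating this across the extreme heights collapses the multiset to $\{1^{(n-5)/2},(\tfrac{n+3}{2})^2,\tfrac{n-1}{2}\}$, the structure theorem of Savchev--Chen \cite{SC2007} underlying Theorem~\ref{theorem of sc and yuan} being the natural tool for controlling these configurations. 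Finally I would verify directly that the displayed sequence is unsplittable of index $2$ and handle small $n$ by hand; the sporadic solution $g\,(3g)^2(4g)(7g)$ at $n=9$ is exactly the degeneration where the generic inequalities fail.

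For even $n$ the same analysis runs, but the element $\tfrac n2\,g$ of order two alters the parity bookkeeping and produces the two extra families, one of which, the sequences $g^t(\tfrac n2 g)((1+\tfrac n2)g)^{2\ell}$, can have arbitrarily large index---precisely why Gao's conjecture fails here. I expect the main obstacle to be this even case: one must show that $\tfrac n2\,g$ is the only source of additional unsplittable sequences, exhibit an explicit split for every multiset not of the two listed shapes, and keep the parity-driven subset-sum casework from proliferating. By comparison, the small-modulus anomalies and the verification that the listed families are genuinely unsplittable are routine once this dichotomy is established.
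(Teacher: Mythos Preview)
This theorem is not proved in the present paper: it is quoted verbatim from Xia--Yuan \cite{XY2010} as Theorem~\ref{theorem of XY2010} and is only \emph{used} later, in the proof of Theorem~\ref{mainresult2}. So there is no in-paper proof to compare your proposal against.

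On its own merits, your proposal is a plan rather than a proof, and the decisive steps are not carried out. The opening splitting observation, that $\ind(S)=1$ with $|S|<n$ forces splittability, is correct and clean, and it already gives the inequality $\ind(S)\ge 2$ in both parities. But the upgrade to $\ind(S)=2$ for odd $n$ is precisely the hard part, and you only assert that ``the forced congruence above cannot be solved inside a multiset this short while respecting the parity constraints''; no mechanism is supplied for ruling out $d\ge 3$. Likewise the structural collapse to $\{1^{(n-5)/2},(\tfrac{n+3}{2})^2,\tfrac{n-1}{2}\}$ is described only schematically (``testing the unit split of the largest height forces a pairing \dots propagating this \dots''), and invoking the Savchev--Chen structure theorem is not obviously legitimate here, since that result controls sequences of length $\ge \mathsf I(G)$, strictly longer than the length $\mathsf I(G)-1$ at hand. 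The even case is left even more open.

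For orientation, the lemmas this paper imports from \cite{XY2010} (Lemmas~\ref{sumsetofunsplittable}, \ref{coefficient}, \ref{subsequenceofsupport1}, \ref{subsequenceofsupport4}) indicate that the original argument proceeds quite differently from your split-test bookkeeping: unsplittability is first recast as the sumset identity $|\Sigma(Sg^{-1})|=n-1$ for every $g\in\supp(S)$, and then additive lower bounds for $|\Sigma(T)|$ over well-chosen subsequences $T$ are summed via Lemma~\ref{partitionofzerosumfreesequence} to cap $|\supp(S)|$ and the multiplicities, after which short arithmetic determines the remaining coefficients. Your direct analysis of individual splits might be made to work, but as written it is an outline with the key inequalities and case eliminations still to be supplied.
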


In this paper, we characterized the unsplittable minimal zero-sum sequences of length $|S|=\mathsf{I}(G)-2$ over a cyclic group $G$ of prime order. Our main results state as following.

\begin{theorem}\label{mainresult1}
Let $p > 155$ be a prime and let $G$ be a cyclic group of order $p$. Let $S$ be an unsplittable minimal zero-sum sequence of length $|S|= \frac{p-1}{2}$ over  $G$. We have $S$ is one of the following forms: \begin{center}$g^{\frac{p-11}{2}}(\frac{p+3}{2}g)^4(\frac{p-1}{2}g)$ or $g^{\frac{p-7}{2}}(\frac{p+5}{2}g)^2(\frac{p-3}{2}g)$.\end{center} Moreover $\ind(S)=2$.
\end{theorem}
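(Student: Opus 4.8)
The plan is to first convert unsplittability into an extremal statement about subsequence sums, and then to run a structural case analysis on the resulting sequences. Throughout write $\sigma(T)$ for the sum of a sequence $T$ and $\Sigma^{*}(T)$ for the set of sums of its nonempty subsequences. The key reduction is the following criterion, valid for any minimal zero-sum sequence $S$ over $G \cong C_p$: \emph{$S$ is unsplittable if and only if $\Sigma^{*}(Sg^{-1}) = G \setminus \{0\}$ for every $g \in \supp(S)$.} To prove it, fix $g \in \supp(S)$ and $x, y$ with $x + y = g$ and $x, y \notin \{0, g\}$, and inspect the zero-sum sequence $S' = Sg^{-1}xy$. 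A proper nonempty zero-sum subsequence of $S'$ cannot avoid both $x$ and $y$ (that would contradict minimality of $S$) and cannot contain both (then either the complement inside $Sg^{-1}$ is a shorter zero-sum subsequence of $S$, or $S'' = S'$ fails to be proper); hence such a subsequence exists precisely when $-x \in \Sigma^{*}(Sg^{-1})$ or $-y = x - g \in \Sigma^{*}(Sg^{-1})$. Thus $S$ is splittable iff some $g \in \supp(S)$ admits an $x \notin \{0,g\}$ with $x \notin -\Sigma^{*}(Sg^{-1})$ and $x \notin g + \Sigma^{*}(Sg^{-1})$. Now $\Sigma(Sg^{-1}) := \Sigma^{*}(Sg^{-1}) \cup \{0\}$ is invariant under $v \mapsto \sigma(Sg^{-1}) - v = -g - v$, so $-\Sigma(Sg^{-1}) = g + \Sigma(Sg^{-1})$, and since $0 \neq g$ one checks $-\Sigma^{*}(Sg^{-1}) \cup (g + \Sigma^{*}(Sg^{-1})) = -\Sigma(Sg^{-1})$. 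As $0$ and $g$ always lie in $-\Sigma(Sg^{-1})$, the criterion collapses to $\Sigma(Sg^{-1}) = G$ for all $g \in \supp(S)$, i.e. to the displayed statement (recall $Sg^{-1}$ is zero-sum-free because $S$ is minimal, so $0 \notin \Sigma^{*}(Sg^{-1})$).

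With this in hand, the second step is to exploit the criterion at one cleverly chosen generator. Pick $g \in \supp(S)$ of maximal multiplicity and write $S = (x_1 g)\cdots(x_\ell g)$ with $1 \le x_1 \le \cdots \le x_\ell \le p - 1$ and $\ell = \frac{p-1}{2}$. The requirement that the $\frac{p-3}{2}$ elements of $Sg^{-1}$ realize all $p - 1$ nonzero elements of $G$ as subsequence sums is an extremely tight covering condition; combining it with $\sum_i x_i \equiv 0 \pmod p$, with standard structure results for zero-sum-free sequences in the spirit of Savchev and Chen, and with minimality of $S$ itself, one forces $x_1 = 1$, a completeness-type bound on the $x_i$, and $\sum_i x_i \in \{p, 2p, 3p\}$, leaving only finitely many explicit profiles for $S$ — each a long block $g^{m}$ followed by a short tail of a few larger elements, mirroring the two sequences in the statement. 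This is also where the hypothesis $p > 155$ enters: the inequalities that rule out the remaining profiles (and, later, force coverage of $G$) become decisive only once $p$ is large enough.

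The third step applies the criterion at the remaining generators. For each surviving profile, rebase $S g'^{-1}$ on each $g' \in \supp(S) \setminus \{g\}$ (the ``medium'' and ``large'' entries); the condition $\Sigma^{*}(Sg'^{-1}) = G \setminus \{0\}$ then becomes the requirement that a union of a bounded number of arithmetic-progression blocks cover $G$, and these tight inequalities eliminate every profile except $g^{\frac{p-11}{2}}(\frac{p+3}{2}g)^{4}(\frac{p-1}{2}g)$ and $g^{\frac{p-7}{2}}(\frac{p+5}{2}g)^{2}(\frac{p-3}{2}g)$. Conversely, a direct check shows that both of these are minimal zero-sum and satisfy $\Sigma^{*}(Sg^{-1}) = G \setminus \{0\}$ for every $g \in \supp(S)$ — hence are unsplittable — while computing the $h$-norm of each for every generator $h$ yields $\ind(S) = 2$. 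I expect the second and third steps to be the main obstacle: passing from ``one extremal zero-sum-free sequence'' to a finite explicit list, and then checking the covering condition simultaneously at all generators, is where essentially all of the technical and computational work lies.
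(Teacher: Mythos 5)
Your first step is correct, but it only reproduces a known lemma: the criterion ``$S$ is unsplittable iff $\Sigma(Sg^{-1})$ omits only $0$ for every $g\in\supp(S)$'' is exactly Lemma~\ref{sumsetofunsplittable}, quoted by the paper from Xia--Yuan, and your derivation of it (via the symmetry $v\mapsto\sigma(Sg^{-1})-v$ of $\Sigma(Sg^{-1})\cup\{0\}$) is sound. The genuine gap is in your second step, which is where the entire content of the theorem lives. You propose to pass from the covering condition to ``finitely many explicit profiles'' by invoking ``standard structure results for zero-sum-free sequences in the spirit of Savchev and Chen,'' but those results do not apply here: the Savchev--Chen/Yuan structure theorem (Theorem~\ref{theorem of sc and yuan}) requires minimal zero-sum sequences of length at least $\lfloor p/2\rfloor+2=\frac{p+3}{2}$, equivalently zero-sum-free sequences of length at least $\frac{p+1}{2}$, whereas $|S|=\frac{p-1}{2}$ and $|Sg^{-1}|=\frac{p-3}{2}$ sit strictly below that threshold. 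Indeed, the very existence of index-$2$ sequences at this length shows the structural conclusion of those theorems fails here; being just below the threshold is precisely what makes the problem nontrivial. Likewise, your assertions that the covering condition ``forces $x_1=1$'' (i.e.\ that the maximal-multiplicity element generates and dominates) and that $\sum_i x_i\in\{p,2p,3p\}$ are exactly the statements that need proof, not inputs one may assume.

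What actually fills this gap in the paper is a quantitative counting scheme with no analogue in your outline: decompose $Sg^{-1}$ into small blocks $T$ for which one can certify $|\Sigma(T)|\ge 2|T|$ or $2|T|+1$ (Lemmas~\ref{subsequenceofsupport1}--\ref{subsequenceofsupport3}, resting on Balandraud's bound, Lemma~\ref{2.2}), then add these up via the superadditivity of Lemma~\ref{partitionofzerosumfreesequence} to exceed $p-1$ and contradict the criterion unless the support and multiplicities are severely constrained. This is how one proves $|\supp(S)|=3$ (Lemmas~\ref{support} and~\ref{support4}) and then pins down the two admissible multiplicity/coefficient patterns (Lemma~\ref{support3}); the hypothesis $p>155$ enters through these inequalities, not merely through ``ruling out remaining profiles.'' Without an argument of this kind your plan has no mechanism for producing a finite list, so as written the proposal is a roadmap rather than a proof; your third step (checking the two survivors and computing $\ind(S)=2$) is fine but presupposes the list.
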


\begin{theorem}\label{mainresult2}
Let $p > 155$ be a prime and let $G$ be a cyclic group of order $p$. Let $T$ be a minimal zero-sum sequence of length $|T|\ge \mathsf{I}(G)-2 = \frac{p-1}{2}$ over  $G$. We have $\ind(T)\le 2$.
\end{theorem}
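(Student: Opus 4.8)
The plan is to deduce Theorem~\ref{mainresult2} from Theorem~\ref{mainresult1} together with Theorem~\ref{theorem of sc and yuan} and Theorem~\ref{theorem of XY2010}, by reducing the general case $|T| \ge \frac{p-1}{2}$ to the boundary cases via the splitting operation. First I would recall that $\mathsf{I}(G) = \lfloor p/2 \rfloor + 2 = \frac{p+3}{2}$ for $p > 155$, so that a minimal zero-sum sequence $T$ with $|T| \ge \mathsf{I}(G) = \frac{p+3}{2}$ automatically has $\ind(T) = 1$ by definition of $\mathsf{I}(G)$. Hence only the two lengths $|T| = \frac{p-1}{2} = \mathsf{I}(G)-2$ and $|T| = \frac{p+1}{2} = \mathsf{I}(G)-1$ remain to be treated, and in each of these the claim $\ind(T) \le 2$ must be established.

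For $|T| = \mathsf{I}(G)-1 = \frac{p+1}{2}$, I would argue as follows. If $T$ is splittable, write $T = Ug^{-1}xy$ where $U = T$... more precisely, there is $g \in \supp(T)$ and $x,y\in G$ with $x+y = g$ and $T' = Tg^{-1}xy$ a minimal zero-sum sequence of length $|T|+1 = \mathsf{I}(G)$; then $\ind(T') = 1$, and one checks directly that splitting cannot increase the index (if $\|T'\|_h = 1$ for a generator $h$, the same $h$ gives $\|T\|_h \le \|T'\|_h = 1$ since replacing $xy$ by $g = x+y$ can only decrease the $h$-norm or keep it the same when no wraparound is lost — this is the standard observation that merging two summands does not increase the norm), so $\ind(T) = 1$. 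If $T$ is unsplittable of length $\mathsf{I}(G)-1$, Theorem~\ref{theorem of XY2010}(1) applies (with $n = p$ odd), giving the explicit form $T = g^{\frac{p-5}{2}}(\frac{p+3}{2}g)^2(\frac{p-1}{2}g)$ with $\ind(T) = 2 \le 2$. (The exceptional value $n = 9$ is excluded since $p > 155$.)

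For $|T| = \mathsf{I}(G)-2 = \frac{p-1}{2}$, I would run the same dichotomy. If $T$ is unsplittable, Theorem~\ref{mainresult1} gives that $T$ is one of the two listed sequences, each with $\ind(T) = 2$, and we are done. If $T$ is splittable, then $T$ can be split into a minimal zero-sum sequence $T'$ of length $\frac{p+1}{2} = \mathsf{I}(G)-1$; by the previous paragraph $\ind(T') \le 2$, and again invoking the fact that splitting does not increase index (equivalently, merging does not increase it — going from $T'$ back to $T$) we get $\ind(T) \le \ind(T') \le 2$. Assembling the three ranges $|T| \ge \mathsf{I}(G)$, $|T| = \mathsf{I}(G)-1$, $|T| = \mathsf{I}(G)-2$ completes the proof.

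The main obstacle, and the point that needs to be stated carefully rather than waved through, is the monotonicity claim: that if $T$ is obtained from a minimal zero-sum sequence $T'$ by merging two terms $x,y$ into $x+y$, then $\ind(T) \le \ind(T')$. One must verify that for a fixed generator $h$ with $\langle \supp(T')\rangle = \langle h\rangle$ one has $\|T\|_h \le \|T'\|_h$: writing $x = ah$, $y = bh$ with $1 \le a,b \le p$, the merged term is $(a+b)h$ or $(a+b-p)h$ according to whether $a+b \le p$, and in either case its $h$-coordinate (taken in $\{1,\dots,p\}$) is at most $a+b$, so the total sum of coordinates does not increase and thus $\|T\|_h \le \|T'\|_h$; since this holds for every admissible $h$, taking minima gives $\ind(T) \le \ind(T')$. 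One also needs that $\supp(T)$ still generates $G$, which is immediate since $p$ is prime and $T$ is a nontrivial minimal zero-sum sequence. With this lemma in hand the rest is pure bookkeeping over the three length ranges.
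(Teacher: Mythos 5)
Your proposal is correct and follows essentially the same route as the paper: handle $|T|\ge \frac{p+3}{2}$ via Theorem~\ref{theorem of sc and yuan}, handle the unsplittable cases at lengths $\frac{p+1}{2}$ and $\frac{p-1}{2}$ via Theorem~\ref{theorem of XY2010}(1) and Theorem~\ref{mainresult1} respectively, and reduce the splittable cases to one length higher using the observation that $\|T\|_h \le \|T'\|_h$ for every generator $h$. The only difference is that you spell out the verification of this norm monotonicity (which the paper dismisses with ``Clearly''), which is a welcome but not structurally different addition.
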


The paper is organized as follows. In the next section, we provide some preliminary results. In section 3, we give a proof for our main results. In the last section, we will give some further remarks.

\section{Preliminaries}

Our notation and terminology are consistent with \cite{GG06} and \cite{Ge09a}. Let $\N_0 = \N \cup \{0 \}$ and for real numbers $a, b$ let $[a,b] = \{x \in \Z \mid a \le x \le b\}$.

Let $G$ be an additive finite abelian group.  Every sequence $S$ over $G$ can be written in the form
\begin{center}
$S=g_1 \cdot  \ldots \cdot g_\ell=\prod_{g \in G} g^{\mathsf v_g (S)}$,\quad with $\mathsf v_g(S) \in \N_0$ for all $g \in G$.
\end{center}
where $\mathsf v_g(S) \in \N_0$ denote the {\it multiplicity} of $g$ in $S$. We call
\begin{itemize}
\item[] $\supp(S) = \{ g \in G \mid \mathsf v_g(S) > 0\}$  the {\it support } of $S$;
\item[] $\mathsf h(S)=\max \{ \mathsf v_g(S) \mid  g\in G\}$  the {\it maximum of the  multiplicities} of $g$ in $S$;
\item[] $|S|= \ell = \sum_{g\in G}\mathsf v_g(S) \in \N_0$ be the {\it length} of $S$;
\item[] $\sigma(S)=\sum_{i=1}^{\ell}g_i = \sum_{g\in G}\mathsf v_g(S)g\in G $ be the {\it sum } of $S$.
\end{itemize}
A sequence $T$ is called a {\it subsequence} of $S$ and denoted by $T \mid S$ if $\mathsf v_g (T) \le \mathsf v_g (S)$ for all $g \in G$. Whenever $T \mid S$, let $ST^{-1}$ denote the subsequence with $T$ deleted from $S$. If $S_1, S_2$ are two disjoint subsequences of $S$, let $$S_1S_2$$ denote the subsequence of $S$ satisfying that $\mathsf v_g(S_1S_2)= \mathsf v_g(S_1) + \mathsf v_g(S_2)$ for all $g \in G$. Let
\begin{center}
$\Sigma(S)=\{ \sigma(T) \mid T \mbox{ is a subsequence of } S  \mbox{ with } 1 \le |T| \le |S|\}.$
\end{center}
The sequence $S$ is called
\begin{itemize}
\item[] {\it zero-sum } \quad if $\sigma(S) = 0 \in G$;
\item[] {\it zero-sum free} \quad if $0 \not\in \Sigma(S)$;
\item[] {\it minimal zero-sum} \quad  if $\sigma(S)=0$ and $\sigma(T)\ne 0$ for every  $T \mid S$  with $1 \le |T| < |S|$.
\end{itemize}

\begin{lemma}\cite[Theorem 5.3.1]{Ge-HK06a}\label{partitionofzerosumfreesequence}
Let $G$ be  an abelian  group. Let $S$ be a zero-sum free sequence over $G$. Suppose $S=S_1S_2\cdots S_t$, then $|\Sigma(S)| \ge \sum_{i=1}^{t}(|\Sigma(S_i)|)$.
\end{lemma}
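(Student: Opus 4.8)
The plan is to induct on $t$, with all the content residing in the case $t=2$. So suppose first $S=S_1S_2$ with $S$ zero-sum free. I would produce two disjoint subsets of $\Sigma(S)$ of the required sizes, namely $\Sigma(S_1)$ and the translate $\sigma(S_1)+\Sigma(S_2):=\{\sigma(S_1)+x \mid x\in \Sigma(S_2)\}$. The inclusion $\Sigma(S_1)\subseteq \Sigma(S)$ is clear, and $\sigma(S_1)+\Sigma(S_2)\subseteq \Sigma(S)$ because for any subsequence $T_2\mid S_2$ with $1\le |T_2|\le |S_2|$ we have $\sigma(S_1)+\sigma(T_2)=\sigma(S_1T_2)$, and $S_1T_2$ is a subsequence of $S$ with $1\le |S_1T_2|\le |S|$. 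Since translation by $\sigma(S_1)$ is a bijection of $G$, $|\sigma(S_1)+\Sigma(S_2)|=|\Sigma(S_2)|$.

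The key step is the disjointness of these two sets, and it is precisely here that zero-sum freeness of $S$ enters. If some element lay in both, then $\sigma(T_1)=\sigma(S_1)+\sigma(T_2)$ for subsequences $T_1\mid S_1$ and $T_2\mid S_2$; rearranging gives $\sigma(S_1T_1^{-1})+\sigma(T_2)=0$, so the subsequence $(S_1T_1^{-1})T_2$ of $S$ is a zero-sum subsequence. It is nonempty since $T_2$ is nonempty, contradicting that $S$ is zero-sum free (here I use that $\Sigma(S)$ includes $\sigma(S)$ itself, so zero-sum freeness rules out $0$ being attained by \emph{any} nonempty subsequence, including $S$). Hence $|\Sigma(S)|\ge |\Sigma(S_1)|+|\Sigma(S_2)|$.

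For the inductive step, observe that every subsequence of a zero-sum free sequence is zero-sum free, so $S_2\cdots S_t$ is zero-sum free; applying the $t=2$ case to the factorization $S=S_1\cdot(S_2\cdots S_t)$ and then the induction hypothesis to $S_2\cdots S_t$ yields $|\Sigma(S)|\ge |\Sigma(S_1)|+\sum_{i=2}^{t}|\Sigma(S_i)|$, as desired. I do not anticipate a real obstacle: the argument is elementary, and the only point requiring slight care is the disjointness claim together with the degenerate cases (an empty factor, or $T_1=S_1$), which are absorbed by the length and sum conventions in the definition of $\Sigma$.
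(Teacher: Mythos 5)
Your proof is correct and is essentially the standard argument for this result (which the paper only cites from Geroldinger--Halter-Koch, Theorem 5.3.1, without reproving): the disjoint union $\Sigma(S_1)\cup\bigl(\sigma(S_1)+\Sigma(S_2)\bigr)\subseteq\Sigma(S)$ is exactly the right decomposition, and you correctly identify that zero-sum freeness of the whole sequence $S$ (not just of $S_1$ and $S_2$) is what forces disjointness, including the boundary case $T_1=S_1$. Nothing further is needed.
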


\begin{lemma}\cite{Bal10}\label{2.2}
Let $p$ be a prime and let $G$ be a cyclic group of order $p$. Suppose $A \subset G$ and $A\cap (-A)=\emptyset$.  Then $|\Sigma(A)|\ge \min \{p, \frac{|A|(|A|+1)}{2}\}.$
\end{lemma}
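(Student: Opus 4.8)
Since Lemma~\ref{2.2} is quoted from \cite{Bal10}, one may simply cite it; here is the route I would take to prove it directly. First I would record the two consequences of the hypothesis that do the work: $A\cap(-A)=\emptyset$ forces $0\notin A$ and $a+b\ne 0$ for all distinct $a,b\in A$ (so no subsequence of $A$ of length at most $2$ sums to $0$), and the statement is invariant under $A\mapsto cA$ for a unit $c$, since $\Sigma(cA)=c\,\Sigma(A)$ and $(cA)\cap(-cA)=\emptyset$. It then suffices to control $|B|$ where $B:=\Sigma(A)\cup\{0\}$, proving $|B|\ge 1+\frac{|A|(|A|+1)}{2}$ truncated at $p$; the truncated case $\frac{|A|(|A|+1)}{2}\ge p$ amounts to $\Sigma(A)=G$ and is the saturation endpoint of the same argument.

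Write $k=|A|$, fix an ordering $A=\{a_1,\dots,a_k\}$, and set $A_j=\{a_1,\dots,a_j\}$ and $B_j=\Sigma(A_j)\cup\{0\}$, so that $B_0=\{0\}$, $B_k=B$, $|B_1|=2$, and
\[ B_j = B_{j-1}\cup(a_j+B_{j-1}), \]
a union of a set with one of its translates. The bound follows once we show the increment estimate $|B_j|\ge|B_{j-1}|+j$ for $1\le j\le k$, because then $|B_k|\ge 1+(1+2+\cdots+k)$ and $|\Sigma(A)|\ge|B_k|-1$. Since $|B_j|=2|B_{j-1}|-|(a_j+B_{j-1})\cap B_{j-1}|$, this is equivalent to $|(a_j+B_{j-1})\cap B_{j-1}|\le|B_{j-1}|-j$; its failure means the translate $a_j+B_{j-1}$ almost coincides with $B_{j-1}$, so $a_j$ is a near-period of $B_{j-1}$, and the plan is to choose $a_j$ among the not-yet-used elements so as to avoid these near-periods.

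The crux is proving such a choice always exists, i.e.\ that one cannot have $|(a+B_{j-1})\cap B_{j-1}|>|B_{j-1}|-j$ for every $a\in A\setminus A_{j-1}$ at once. I would establish this with Hamidoune's isoperimetric method in the form used by Balandraud: a set $B_{j-1}$ that overlaps all of its admissible translates this heavily must, after the normalization above, be an interval $\{0,1,\dots,m\}$ with $m=|B_{j-1}|-1$ (the alternative, a union of cosets of a nontrivial subgroup, gives $B_{j-1}=G$ since $p$ is prime, and then the truncated bound is already attained). But if $B_{j-1}=\{0,1,\dots,m\}$ comes from $A_{j-1}$, then the residues $1,\dots,j-1$ are used up, every remaining $a$ satisfies $a\ge j$ as a residue in $[1,p-1]$, and hence $|(a+B_{j-1})\cap B_{j-1}|=\max(0,\,m+1-a)\le m+1-j=|B_{j-1}|-j$, a contradiction. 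Turning this dichotomy into a rigorous argument — in particular pinning down how $A\cap(-A)=\emptyset$ excludes the cancellation patterns that would otherwise defeat it — is the main obstacle and is the technical content of \cite{Bal10}, which I would quote in the stated form. I would close by noting sharpness: when $\frac{k(k+1)}{2}<p$ the set $A=\{1,2,\dots,k\}$ has $\Sigma(A)=\{1,2,\dots,\frac{k(k+1)}{2}\}$, meeting the bound with equality.
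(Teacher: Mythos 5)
The paper gives no proof of Lemma~\ref{2.2}; it is quoted verbatim from \cite{Bal10}, and your proposal ultimately does the same, deferring the decisive step (excluding the heavily overlapping translates via the isoperimetric method) to that reference. The elementary reductions and the incremental set-up you do carry out are correct, so this is essentially the same approach as the paper, namely a citation of Balandraud's theorem.
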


\begin{lemma}\label{zerosumfreesubset}
Let $p$ be a prime and let $G$ be a cyclic group of order $p$. Let $A$ be a zero-sum free subset of $G$, then $|\Sigma(A)|\ge \min \{p, \frac{|A|(|A|+1)}{2}\}.$
\end{lemma}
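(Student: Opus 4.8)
The plan is to deduce this statement directly from Lemma \ref{2.2}, which already gives the desired bound $|\Sigma(A)| \ge \min\{p, \frac{|A|(|A|+1)}{2}\}$ under the extra hypothesis $A \cap (-A) = \emptyset$. So the whole content of the lemma is the observation that a zero-sum free subset $A$ of a cyclic group of prime order automatically satisfies $A \cap (-A) = \emptyset$; once that is in hand the conclusion is immediate.

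First I would record that $0 \notin A$, since the length-one sequence $0$ is zero-sum. Then, arguing by contradiction, suppose some $a \in A$ also has $-a \in A$. When $p$ is odd, $a = -a$ would give $2a = 0$ and hence $a = 0$, which has just been excluded; therefore $a \ne -a$, and $\{a,-a\}$ is a subsequence of $A$ of length $2$ with sum $a + (-a) = 0$, contradicting that $A$ is zero-sum free. Thus $A \cap (-A) = \emptyset$, and Lemma \ref{2.2} applies word for word. The degenerate prime $p = 2$ can be disposed of by inspection: the only zero-sum free subsets of $\Z/2\Z$ are $\emptyset$ and $\{1\}$, for which $|\Sigma(A)|$ is $0$ and $1$, matching $\frac{|A|(|A|+1)}{2}$ in each case (and in any event $p > 155$ in all later applications).

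There is no genuine obstacle in this lemma; the only point that deserves to be stated explicitly is the step "zero-sum free $\Rightarrow A \cap (-A) = \emptyset$", which is exactly where one uses that $p$ is an odd prime rather than merely that $G$ is cyclic: in a group carrying $2$-torsion an element can coincide with its own inverse, so a singleton of $2$-torsion would be zero-sum free yet meet its negative. With primality assumed, this degeneracy cannot occur, and the reduction to Lemma \ref{2.2} is clean.
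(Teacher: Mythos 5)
Your proposal is correct and follows exactly the same route as the paper: observe that a zero-sum free subset satisfies $A \cap (-A) = \emptyset$ and then invoke Lemma~\ref{2.2}. The paper states this reduction in one line without the justification you supply (why $a \ne -a$ for odd $p$, and the trivial case $p=2$), so your version is simply a more detailed writing of the same argument.
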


\begin{proof}
Since $A$ is a zero-sum free subset, we have $A \cap (-A)=\emptyset$. Hence the results follows from Lemma~ \ref{2.2}.
\end{proof}

\begin{lemma}\cite[Lemma 2.14]{XY2010}\label{sumsetofunsplittable}
Let $p$ be a prime and let $G$ be a cyclic group of order $p$. Suppose $S$ is a minimal zero-sum sequence of elements over $G$. Then $S$ is unsplittable if and only if  $|\Sigma(Sg^{-1})|=p-1$ for every $g \in \supp(S)$.
\end{lemma}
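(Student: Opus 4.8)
The plan is to prove the stronger \emph{pointwise} statement and then read off the lemma: for each fixed $g \in \supp(S)$, the sequence $S$ can be split at $g$ (i.e.\ there exist $x,y \in G$ with $x+y=g$ and $Sg^{-1}xy$ minimal zero-sum) if and only if $\Sigma(Sg^{-1}) \ne G \setminus \{0\}$. The lemma is then immediate. Indeed, since $S$ is minimal zero-sum, $B := Sg^{-1}$ is zero-sum free: any nonempty zero-sum subsequence of $B$ would be a proper nontrivial zero-sum subsequence of $S$, as it has length between $1$ and $|S|-1$. Hence $0 \notin \Sigma(B)$, so $\Sigma(B) \subseteq G\setminus\{0\}$ and $|\Sigma(B)| \le p-1$ always. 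Therefore $S$ is unsplittable precisely when splitting fails at every $g$, i.e.\ when $\Sigma(Sg^{-1}) = G\setminus\{0\}$, equivalently $|\Sigma(Sg^{-1})| = p-1$, for all $g \in \supp(S)$.

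The central device is a symmetry of the subsequence-sum set. Write $\Sigma_0(B) = \Sigma(B) \cup \{0\}$ for the set of \emph{all} subsequence sums of $B$, the empty subsequence included, and note $\sigma(B) = \sigma(S) - g = -g$. For any subsequence $W \mid B$ the complementary subsequence satisfies $\sigma(BW^{-1}) = -g - \sigma(W)$, so $\Sigma_0(B)$ is invariant under the involution $\iota \colon t \mapsto -g - t$; hence its complement $U := G \setminus \Sigma_0(B)$ is $\iota$-invariant as well. Since $0, -g \in \Sigma_0(B)$, every $u \in U$ satisfies $u \ne 0$, $u \ne -g$, and $\iota(u) = -g-u \in U$. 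Observe that $U = \emptyset$ is precisely the condition $\Sigma(B) = G\setminus\{0\}$.

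For the easy implication I would argue from minimality. If $Sg^{-1}xy$ is minimal zero-sum, then for every $W \mid B$ the subsequence $xW$ is a nonempty proper subsequence of $Sg^{-1}xy$, so $x + \sigma(W) \ne 0$; thus $-x \notin \Sigma_0(B)$, which forces $-x \in U$ and therefore $\Sigma(B) \ne G\setminus\{0\}$. Conversely, assuming $U \ne \emptyset$, I would choose any $u \in U$, set $x := -u$ and $y := g+u$ (so $x+y=g$), and check that $T := Bxy$ is minimal zero-sum. Here $x,y \ne 0$ because $u \ne 0,-g$, and $\sigma(T) = -g + x + y = 0$. The subsequences of $T$ fall into four types, handled using $u$ and $\iota(u) = -y \in U$: subsequences lying inside $B$ are never zero-sum; a subsequence $xW$ has sum $\sigma(W)-u \ne 0$ since $u \notin \Sigma_0(B)$; a subsequence $yW$ has sum $\sigma(W)-\iota(u) \ne 0$ since $\iota(u) \notin \Sigma_0(B)$; and a \emph{proper} subsequence $xyW$ has sum $g + \sigma(W)$, which vanishes only if $\sigma(W) = -g = \sigma(B)$, impossible for proper $W$ because then $BW^{-1}$ would be a nonempty zero-sum subsequence of $B$.

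The only genuinely nontrivial ingredient is the involution symmetry of $\Sigma_0(B)$: it is exactly what makes the construction of the splitting go through even when only a single value is missing from $\Sigma(B)$, since the two inserted elements $x$ and $y$ arise as the sums of a subsequence and its complement and so are simultaneously feasible the moment $U$ is nonempty. Everything else is a routine case analysis on the subsequences of $Bxy$, and the passage from the pointwise equivalence to the stated lemma is formal.
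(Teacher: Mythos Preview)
Your proof is correct. The paper itself does not prove this lemma but merely cites it as \cite[Lemma~2.14]{XY2010}, so there is no in-paper argument to compare against; the complementation/involution device you use on $\Sigma_0(Sg^{-1})$ is the standard and natural approach, and every step (zero-sum freeness of $B=Sg^{-1}$, the symmetry $t\mapsto -g-t$ of $\Sigma_0(B)$, and the four-way case split on subsequences of $Bxy$) checks out.
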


\begin{lemma}\cite[Lemma 2.15]{XY2010}\label{supportoftwo}
Let $p$ be a prime and let $G$ be a cyclic group of order $p$. Let $S$ be a minimal zero-sum sequence consisting of two distinct elements. Then $S $ is splittable.
\end{lemma}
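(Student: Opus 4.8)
The plan is to prove the stronger statement that every minimal zero-sum sequence $S$ with $|\supp(S)| = 2$ satisfies $\ind(S) = 1$, and then to split it explicitly. Write $S = a^m b^n$ with $a \neq b$, both nonzero, and $m, n \ge 1$. First I would record two constraints forced by minimality. Since $a^p$ (resp.\ $b^p$) is a zero-sum sequence, minimality gives $m, n \le p-1$, so $m$ and $n$ are invertible modulo $p$. Moreover $\gcd(m,n) = 1$: if $d = \gcd(m,n) > 1$ then $d$ is invertible mod $p$, so $ma + nb = 0$ yields $(m/d)a + (n/d)b = 0$, a proper nontrivial zero-sum subsequence of $S$, a contradiction.

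Next I would translate splittability into the language of the index. For a generator $g$, writing $a = \alpha g$ and $b = \beta g$ with $\alpha, \beta \in [1,p-1]$, one has $\|S\|_g = (m\alpha + n\beta)/p$, a positive integer; hence $\ind(S) = 1$ precisely when $m\alpha + n\beta = p$ for some generator $g$. The relation $ma + nb = 0$ forces $\beta \equiv (ba^{-1})\alpha \pmod p$, so this congruence is automatic and it suffices to produce positive integers $\alpha, \beta$ with $m\alpha + n\beta = p$; given such a pair, $g := a\alpha^{-1}$ is a generator realizing norm one. Once $\ind(S) = 1$ is known, the split is immediate: from $m\alpha + n\beta = p$ we get $m + n \le p$ with equality impossible (it would force $a = b = g$), so $|S| = m+n < p$; thus some coordinate, say $\alpha \ge 2$, and splitting one copy of $a$ as $a = g + (\alpha - 1)g$ produces $S' = a^{m-1}b^n\, g\,((\alpha-1)g)$, whose $g$-coordinates still lie in $[1,p-1]$ and sum to $p$. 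Any proper nonempty subsequence of such a norm-one zero-sum sequence has $g$-coordinates summing into $[1,p-1]$, hence nonzero sum, so $S'$ is a minimal zero-sum sequence and $S$ is splittable.

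The crux — and the step I expect to be the main obstacle — is the purely additive assertion that minimality forces $p$ to be positively representable by $(m,n)$. This is delicate because $mn > p$ does occur for minimal two-element sequences, and for a general coprime pair positive representability of $p$ can fail. I would argue by contradiction. As $\gcd(m,n) = 1$, the equation $m\alpha + n\beta = p$ has a unique integer solution with $\alpha \in [1,n]$; if $p$ were not positively representable, this solution would have $\beta \le 0$, and in fact $\beta \le -1$ since $m\alpha = p$ is impossible for a prime $p$ with $m,\alpha < p$. Write $\beta = -\gamma$ with $\gamma \ge 1$, so $m\alpha - n\gamma = p$; from $\alpha \le n$ one gets $n\gamma = m\alpha - p \le mn - p$, hence $\gamma \le m-1$. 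Now, using $ma = -nb$,
\[
\gamma a + \alpha b = \frac{b}{m}\,(m\alpha - n\gamma) = \frac{b}{m}\,p = 0 \quad \text{in } G.
\]
Therefore $a^{\gamma}b^{\alpha}$ is a nonempty zero-sum subsequence of $S$, proper because $\gamma < m$, contradicting minimality. Hence the positive representation exists, $\ind(S) = 1$, and by the previous paragraph $S$ is splittable.
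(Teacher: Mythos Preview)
Your argument is correct. The paper does not prove this lemma itself but simply cites \cite[Lemma~2.15]{XY2010}, so there is no in-paper proof to compare against; what you have written is a complete, self-contained proof.

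A few remarks confirming the delicate steps. The coprimality $\gcd(m,n)=1$ is exactly right: since $1\le m,n\le p-1$ and $p$ is prime, any common divisor $d>1$ is a unit modulo $p$, and dividing the relation $ma+nb=0$ by $d$ produces the proper zero-sum subsequence $a^{m/d}b^{n/d}$. In the crux step, your choice of the unique $\alpha\in[1,n]$ with $m\alpha\equiv p\pmod n$ is legitimate because $\gcd(m,n)=1$; the case $\beta=0$ is indeed excluded since $m\alpha=p$ with $1\le m,\alpha\le p-1$ contradicts primality of $p$; and from $\alpha\le n$ you correctly obtain $\gamma=(m\alpha-p)/n\le m-p/n<m$, so $1\le\gamma\le m-1$ and $a^{\gamma}b^{\alpha}$ is a genuine proper subsequence. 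The identity
\[
\gamma a+\alpha b=m^{-1}b\,(m\alpha-n\gamma)=m^{-1}b\cdot p=0
\]
is valid in $G$ since $m$ is a unit modulo $p$.

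Finally, the passage from $\ind(S)=1$ to an explicit split is clean: once $m\alpha+n\beta=p$ with $\alpha,\beta\ge 1$ and not both equal to $1$ (which would force $a=b$), one of $\alpha,\beta$ is at least $2$, and replacing the corresponding element by $g+(\alpha-1)g$ (or the analogue for $b$) yields a sequence whose $g$-coordinates still lie in $[1,p-1]$ and sum to $p$, hence is minimal zero-sum. Your proof in fact establishes the slightly stronger fact that every minimal zero-sum sequence over $C_p$ with support of size two has index one.
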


For convenience, from Lemma \ref{coefficient} till Lemma \ref{subsequenceofsupport3} we always assume that $p$ is a prime and $G$ is a cyclic group of order $p$. Let $S$ be an unsplittable minimal zero-sum sequence of elements over $G$.

\begin{lemma}\cite[Lemma 2.5]{XY2010}\label{coefficient}
Suppose $g, tg\in \supp(S)$ with $t \in [2, p-1]$. Then $t \ge \mathsf v_g(S)+2$. Moreover  $t \ne \frac{p+1}{2}$.
\end{lemma}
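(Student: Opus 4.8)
The plan is to extract, from the hypothesis that $g$ and $tg$ both occur in $S$, a large forbidden set of "missing" subset-sums, and then to compare its size with the constraint $|\Sigma(Sg^{-1})| = p-1$ coming from unsplittability (Lemma \ref{sumsetofunsplittable}). First I would write $m = \mathsf v_g(S)$ and consider the subsequence $g^m(tg)$ of $S$. Since $S$ is a minimal zero-sum sequence, no subsequence of $S$ of length between $1$ and $|S|-1$ can sum to $0$; applying this to $Sg^{-1}$, the point is that certain elements of $G$ cannot lie in $\Sigma(Sg^{-1})$, because if they did, combining with the deleted copy of $g$ would produce a proper zero-sum subsequence of $S$, or a zero-sum subsequence of length $|S|$ that is not all of $S$. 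Concretely, write $\sigma(S) = 0$, so $\sigma(Sg^{-1}) = -g$; then for any subsequence $T \t Sg^{-1}$ we have $\sigma((Sg^{-1})T^{-1}) = -g - \sigma(T)$, and minimality forbids $\sigma(T) = 0$ and $\sigma(T) = -g$ among other values. The elements $g, 2g, \dots, (m-1)g$ (coming from the $g^{m-1}$ inside $Sg^{-1}$) together with the value $tg$ already account for many elements, and I would organize these into a chain of forbidden residues.

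The key computation is to show that if $t \le m+1$, then the subsequence $g^{m-1}(tg)$ of $Sg^{-1}$ has $|\Sigma(g^{m-1}(tg))|$ strictly smaller than it "should" be, in a way that is incompatible with $|\Sigma(Sg^{-1})| = p-1$; more precisely I would show directly that under $t \le m+1$ one can split $S$ at the element $tg$, writing $tg = g + (t-1)g$ with both $g$ and $(t-1)g$ available as sums that keep $Sg^{-1}(tg)^{-1}\,g\,((t-1)g)$ zero-sum, contradicting unsplittability. Indeed $Sg^{-1}(tg)^{-1}g\,((t-1)g)$ has the same sum as $S$, namely $0$; the only thing to check is that it is still \emph{minimal} zero-sum, i.e. that no proper zero-sum subsequence is created, and this is exactly where the bound $t \le m+1$ (equivalently $t-1 \le m$, so that $(t-1)g$ is "not too large" relative to the $g$-part) is used: a proper zero-sum subsequence of the new sequence would translate back into a proper zero-sum subsequence of $S$. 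So the first assertion $t \ge m+2$ follows by contraposition.

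For the refinement $t \ne \frac{p+1}{2}$, the idea is that $t = \frac{p+1}{2}$ is special because $2t \equiv 1 \pmod p$, so $tg = \frac{p+1}{2}g$ satisfies $tg + tg = g$. Thus $(tg)^2$ could be replaced by $g$ together with... — more usefully, since $g$ occurs in $S$, the element $tg$ with $2t \equiv 1$ lets us write $g = tg + tg$, and if $\mathsf v_{tg}(S) \ge 1$ while $g \in \supp(S)$ one can try the split $g \mapsto (tg) + (tg)$; one must instead split at $g$ itself: $Sg^{-1}(tg)(tg)$ has sum $0 - g + 2tg = -g + g = 0$, and again checking minimality (using that $tg$ is already present, and the structure forced by $t \ge m+2$ from the first part) yields that $S$ is splittable, a contradiction. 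I expect the main obstacle to be the bookkeeping in the minimality check: one has to rule out \emph{every} newly created proper zero-sum subsequence, and this requires carefully tracking which subset sums of $Sg^{-1}$ can equal $-g$, $-2g$, $g-2tg$, etc., and invoking minimality of $S$ at each step; getting the inequality on $t$ to be sharp (why $m+2$ and not $m+1$) is the delicate point.
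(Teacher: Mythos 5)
Your overall strategy---prove both assertions by exhibiting an explicit split of $S$ and contradicting unsplittability---is the right one, and the two splits you pick are the correct ones: $tg=g+(t-1)g$ for the first claim and $g=tg+tg$ for the second (using $2\cdot\frac{p+1}{2}\equiv 1 \pmod p$). (The paper itself only cites this lemma from \cite{XY2010}, so there is no in-text proof to compare with.) But the proposal stops exactly where the content of the lemma begins: in both cases you assert that a proper zero-sum subsequence of the new sequence ``would translate back'' into one of $S$ without carrying out the translation, and you explicitly flag that you do not see why the threshold is $\mathsf v_g(S)+2$ rather than $\mathsf v_g(S)+1$. That verification is the whole lemma, so as written this is a plan, not a proof. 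There is also a bookkeeping error: the sequence you display, $Sg^{-1}(tg)^{-1}g((t-1)g)$, equals $S(tg)^{-1}((t-1)g)$ and has sum $-g$, not $0$; the split sequence must be $S'=S(tg)^{-1}\cdot g\cdot((t-1)g)$, of length $|S|+1$. The $\Sigma$-counting plan of your first paragraph is never used (and the claim that minimality forbids $\sigma(T)=-g$ fails for $T=Sg^{-1}$ itself).

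Here is the missing step for the first claim, with $m=\mathsf v_g(S)$ and $t\le m+1$. Let $T'$ be a proper nonempty zero-sum subsequence of $S'$; after replacing $T'$ by $S'T'^{-1}$ if necessary, either both inserted elements lie in $T'$, in which case $T'g^{-1}((t-1)g)^{-1}(tg)$ is a proper nonempty zero-sum subsequence of $S$, or only the inserted $g$ does. In the latter case write $T'=g\,T_0$ with $T_0\mid S(tg)^{-1}$ and $\sigma(T_0)=-g$. If $\mathsf v_g(T_0)<m$ then $T_0g\mid S$ is already a forbidden proper zero-sum subsequence; hence $g^m\mid T_0$, and then $\bigl(T_0g^{-m}\bigr)(tg)\,g^{\,m+1-t}$ is a nonempty zero-sum subsequence of $S$ which is proper because it contains only $m+1-t\le m-1$ copies of $g$. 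The exponent $m+1-t\ge 0$ is precisely where $t\le m+1$ is used, and the fact that this subsequence is proper (needs $t\ge 2$) is why the bound is $m+2$ and not $m+1$. A parallel case analysis is required for $t=\frac{p+1}{2}$: a zero-sum $T''\mid Sg^{-1}(tg)^2$ using $k$ more copies of $tg$ than $S$ possesses ($k\in\{1,2\}$) is converted back by replacing two copies of $tg$ in $T''$ by one $g$, which again lands in a proper nonempty zero-sum subsequence of $S$. Without these two verifications the argument is incomplete.
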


\begin{lemma}\cite[Lemma 2.6]{XY2010}\label{subsequenceofsupport1}
Suppose $g, h\in \supp(S)$ with $g\ne h$. Then
\begin{itemize}
\item[(1)] If $k\in [0, \mathsf v_g(S)]$, then $|\Sigma(g^kh)| =2k+1$.
\item[(2)] If $\mathsf v_g(S) \ge 2$ and $\mathsf v_h(S) \ge 2$, then $|\Sigma(g^2h^2)|=8$.
\end{itemize}
\end{lemma}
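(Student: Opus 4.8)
The plan is to write down $\Sigma(g^k h)$ and $\Sigma(g^2 h^2)$ explicitly as sets, read off the evident upper bounds on their cardinalities, and then exclude every possible coincidence among the listed sums; the bookkeeping in this last step is where essentially all of the work sits. Since $G$ has prime order $p$, every nonzero element is a generator, so I may write $h = tg$ with $t \in [2, p-1]$ (here $t \neq 1$ because $g \neq h$) and $\ord(g) = p$. For part (1) I would first list the nonempty subsequences of $g^k h$: these are $g^i$ for $i \in [1,k]$ and $g^j h$ for $j \in [0,k]$, so that $\Sigma(g^k h) = \{\, ig : i \in [1,k]\,\} \cup \{\, jg + h : j \in [0,k]\,\}$, a union of a set of size $k$ and a set of size $k+1$. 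Each of these two sets consists of pairwise distinct elements because $k \le \mathsf v_g(S) \le |S| - 1 < p = \ord(g)$. Hence $|\Sigma(g^k h)| = 2k+1$ holds precisely when the two sets are disjoint, i.e.\ when $h \neq mg$ for every $m \in [1-k,k]$ with $m \neq 0$. This splits into two cases: a coincidence with $m \in [1,k]$ would give $t = m \le k$, contradicting $t \ge \mathsf v_g(S) + 2 \ge k+2$ from Lemma~\ref{coefficient}; a coincidence with $m \in [1-k,-1]$, say $m = -m'$ with $m' \in [1,k-1]$, would make $g^{m'} h$ a zero-sum subsequence of $S$ of length $m'+1 \le k \le \mathsf v_g(S) < |S|$ (the strict inequality because $h \mid S$ and $h \neq g$), contradicting the minimality of $S$. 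This should settle (1).

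For part (2) I would apply (1) with $k = 2$ to $g^2 h$ and, with the roles of $g$ and $h$ interchanged, to $g h^2$, getting that $\Sigma(g^2 h) = \{g, 2g, h, g+h, 2g+h\}$ and $\Sigma(g h^2) = \{g, h, 2h, g+h, g+2h\}$ each have $5$ distinct elements. The eight nonempty subsequences of $g^2 h^2$ have sums $g, 2g, h, 2h, g+h, 2g+h, g+2h, 2g+2h$, and one checks that $\Sigma(g^2 h^2) = \Sigma(g^2 h) \cup \Sigma(g h^2) \cup \{2g+2h\}$. I expect $\Sigma(g^2 h) \cap \Sigma(g h^2)$ to be exactly $\{g, h, g+h\}$: the other conceivable coincidences force either $h = 2g$ or $g = 2h$ (that is, $t = 2$ or $t = \frac{p+1}{2}$), both excluded by Lemma~\ref{coefficient}, or $h = -g$, excluded because $gh$ would then be a proper zero-sum subsequence of $S$ (recall $|S| \ge \mathsf v_g(S) + \mathsf v_h(S) \ge 4$). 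Thus the union already has $5 + 5 - 3 = 7$ elements, and it remains to show $2g+2h$ differs from all seven. Four of those comparisons are immediate — $2g+2h$ equal to $2g$, $2h$, $2g+h$ or $g+2h$ would force $g = 0$ or $h = 0$ — while $2g+2h \in \{g, h, g+h\}$ would make $g h^2$, $g^2 h$, or $gh$ a proper zero-sum subsequence of $S$ of length at most $3 < 4 \le |S|$, again contradicting minimality. Hence $|\Sigma(g^2 h^2)| = 8$.

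No deep idea is involved; the only real care needed is deciding, for each potential collision among these small sums, whether it is excluded by the coefficient bounds $t \ge \mathsf v_g(S)+2$ and $t \neq \frac{p+1}{2}$ of Lemma~\ref{coefficient}, or by the minimality of $S$, which forbids short zero-sum subsequences. The one mild subtlety I anticipate is in part (1): the collisions that ``wrap around'' modulo $p$ (the case $m < 0$) are precisely the ones handled by minimality rather than by Lemma~\ref{coefficient}, and one must not conflate them with the case $m > 0$. In part (2) the task is simply to keep the list of eight sums and their pairwise relations organized.
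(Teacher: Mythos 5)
Your proof is correct. The paper itself gives no proof of this lemma (it is quoted verbatim from \cite[Lemma 2.6]{XY2010}), but your direct verification --- listing the $2k+1$ (resp.\ $8$) candidate sums and excluding each possible coincidence either by the bounds $t\ge \mathsf v_g(S)+2$, $t\ne\frac{p+1}{2}$ of Lemma~\ref{coefficient} or by the minimality of $S$ --- is exactly the expected argument, and every case check, including the wrap-around collisions handled by minimality in part (1), is sound.
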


\begin{lemma}\label{subsequenceofsupport2}
Let $T=g^k(xg)^2$ be a subsequence of $S$, where $k \ge 3$. Then  $|\Sigma(T)| \ge 2|T|$. Moreover apart from  the case $T=g^k(\frac{p+3}{2}g)^2$, $|\Sigma(T)| \ge 2|T|+1.$
\end{lemma}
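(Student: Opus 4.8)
The plan is to write $T = g^k (xg)^2$ with $k \ge 3$, where we may assume $2 \le x \le p-1$ since $xg \ne g$ (if $x=1$ then $T = g^{k+2}$ is zero-sum free of a single element and $|\Sigma(T)| = k+2 \ge 2|T|+1$ fails only trivially; more to the point $T$ being a subsequence of the minimal zero-sum sequence $S$ with $g, xg \in \supp(S)$ distinct forces $x \in [2,p-1]$). Since $T \mid S$ and $S$ is a minimal zero-sum sequence, $T$ is zero-sum free, so $\Sigma(T) \subseteq G \setminus \{0\}$ and $|\Sigma(T)| \le p-1$; hence it suffices to produce the stated lower bound or reach $p-1$. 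The subsums of $T$ are exactly the elements $ag + b(xg) = (a + bx)g$ with $0 \le a \le k$, $0 \le b \le 2$, $(a,b) \ne (0,0)$. So $\Sigma(T) = \{ (a+bx)g : 0 \le a \le k,\ 0 \le b \le 2\} \setminus \{0\}$, and I want to count the distinct residues $a + bx \pmod p$ in the box $[0,k] \times [0,2]$.

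The key step is a three-progressions argument: the three sets $A_0 = \{0,1,\dots,k\}$, $A_1 = \{x, x+1, \dots, x+k\}$, $A_2 = \{2x, 2x+1,\dots,2x+k\}$ are arithmetic progressions of common difference $1$ and length $k+1$ in $\Z/p\Z$, and $\Sigma(T) \cup \{0\}$ is their union. If these three progressions were pairwise ``almost disjoint'' mod $p$ we would already get close to $3(k+1)$ elements, which dwarfs $2|T| = 2(k+2)$. The overlaps are governed by the residue of $x$: $A_0 \cap A_1$ is nonempty mod $p$ only when $x \in [0,k]$ or $x \in [p-k, p-1]$ (roughly), and similarly for the other pairs. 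So I would split into cases according to where $x$ lies. In the ``generic'' case where $x$ and $2x$ (mod $p$) are far from $[0,k]$ and from each other, the three progressions are essentially disjoint and $|\Sigma(T)| \ge 3(k+1) - O(1) \ge 2(k+2)+1$, using $k \ge 3$. In the degenerate cases one collapses to a shorter union but can instead exploit that $A_0 \cup A_1$ (two overlapping or adjacent progressions of difference $1$) is itself a long interval; e.g. if $x \in [1,k+1]$ then $A_0 \cup A_1 \cup A_2$ is an interval of length roughly $2x + k$, and combining with $k \ge 3$ gives enough, with a finite check needed for the smallest values of $k$ and $x$. The special value $x = \frac{p+1}{2}$ cannot occur by Lemma~\ref{coefficient}, and the genuinely tight case, where equality $|\Sigma(T)| = 2|T|$ is attained, should be isolated to $x \equiv \frac{p+3}{2} \pmod p$ (equivalently $2x \equiv 3$), where $A_2$ is the short progression $\{3, 4, \dots, k+3\}$ sitting almost inside $A_0$ shifted, and $A_1$ is far away; a direct count then gives exactly $2(k+2)$ and shows the ``$+1$'' genuinely fails only there.

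The main obstacle I expect is the bookkeeping in the overlap analysis: writing $\Sigma(T)$ as a union of three length-$(k+1)$ intervals mod $p$ is clean, but extracting a \emph{tight} count (good enough to see exactly when equality $2|T|$ versus $2|T|+1$ holds) requires carefully tracking, for each congruence class of $x$ mod $p$, how much the three intervals telescope into one or two longer intervals, and this interacts with the constraint $x \ge \mathsf v_g(S) + 2 = k + 2$ from Lemma~\ref{coefficient}, which already rules out the worst overlaps ($x \le k$) except possibly at the top end $x$ near $p-1$ and at $x \equiv \frac{p+3}{2}$. I would handle the bulk via the disjointness estimate and reserve explicit small-case verification (say $k \in \{3,4,5\}$, or $x$ within a bounded distance of the endpoints $0$ or $p$) for a short case list at the end, which is the only place routine computation is unavoidable.
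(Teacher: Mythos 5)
Your proposal is correct and follows essentially the same route as the paper: both view $\Sigma(T)$ as the union of the three length-$(k+1)$ progressions $[1,k]$, $[x,x+k]$, $[2x,2x+k]$ modulo $p$, use Lemma~\ref{coefficient} ($x\ge k+2$, $x\ne\frac{p+1}{2}$) together with minimality of $S$ (forcing $x+k<p$ and $2x+k<p$ when $2x<p$) to control the overlaps, and isolate equality to the case $2x\equiv 3\pmod p$, i.e.\ $x=\frac{p+3}{2}$. The paper's case split ($2x<p$; $2x>p$ with $2x-p>k$ or $2x-p\le k$) is exactly the bookkeeping you describe.
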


\begin{proof} Since $S$ is unsplittable, by Lemmas \ref{coefficient}, we have $x \ge k+2$ and $x \ne \frac{p+1}{2}.$

If $2x < p$, since $S$ has minimal zero-sum, we have $2x+k < p$. Then $g,2g,\ldots,kg, xg, (x+1)g, \ldots, (x+k)g, 2xg, (2x+1)g, \ldots, (2x+k)g$ are pairwise distinct and hence $|\Sigma(T)| \ge 3k+2 \ge 2|T|+1.$

Next assume that $2x > p$. Then $x \ge \frac{p+3}{2}$. Since $S$ has minimal zero-sum, we have $x+k < p$, and hence $x > 2x-p+k$.

If $2x-p > k$, then $g,2g,\ldots,kg, (2x-p)g, (2x-p+1)g, \ldots, (2x-p+k)g, xg, (x+1)g, \ldots, (x+k)g$ are pairwise distinct and hence $|\Sigma(T)| \ge 3k+2 \ge 2|T|+1.$

If $ 2x-p \le k$, then $g,2g,\ldots,kg, (k+1)g, \ldots,  (2x-p+k)g, xg, (x+1)g, \ldots, (x+k)g$ are pairwise distinct and hence $|\Sigma(T)| \ge (2x-p+k)+(k+1) \ge 2|T|,$ and the equality holds if and only if $x= \frac{p+3}{2}$.
\end{proof}

\begin{lemma}\cite[Lemma 2.11]{XY2010}\label{subsequenceofsupport4}
Let $T=g_1^kg_2g_3$ be a subsequence of $S$. Then $|\Sigma(T)| \ge 2|T|$, moreover apart from  the case $T=g_1^k(\frac{p-1}{2}g_1)(\frac{p+3}{2}g_1)$, $|\Sigma(T)| \ge 2|T|+1.$
\end{lemma}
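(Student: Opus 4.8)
The plan is to reduce the estimate to a counting problem about unions of residue intervals in $\Z_p$. We may assume $g_1,g_2,g_3$ are pairwise distinct, since otherwise $T$ has the shape treated in Lemma~\ref{subsequenceofsupport1} or Lemma~\ref{subsequenceofsupport2}; write $g_2=ag_1$ and $g_3=bg_1$ with $a,b\in[2,p-1]$, and, using the symmetry of $\Sigma$ in $g_2,g_3$, assume $a<b$. Identifying $tg_1$ with $t\bmod p$, the set $\Sigma(T)$ is the union in $\Z_p$ of the four intervals $I_0=[1,k]$, $I_1=[a,a+k]$, $I_2=[b,b+k]$ and $I_3=[a+b,a+b+k]$ (each reduced mod $p$), arising from the subsequences $g_1^j$, $g_1^jg_2$, $g_1^jg_3$ and $g_1^jg_2g_3$ respectively. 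Thus $|I_0|=k$, $|I_1|=|I_2|=|I_3|=k+1$, and it suffices to bound $|I_0\cup I_1\cup I_2\cup I_3|$ below by $2|T|=2k+4$, with the stronger bound $2k+5$ off the exceptional case.

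First I would record the constraints. Lemma~\ref{coefficient} gives $a,b\ge \mathsf v_{g_1}(S)+2\ge k+2$ and $a,b\ne\frac{p+1}{2}$. Since $g_1^kg_2$ and $g_1^kg_3$ are proper subsequences of the minimal zero-sum sequence $S$, minimality yields $0\notin\Sigma(g_1^kg_2)=I_0\cup I_1$ and $0\notin\Sigma(g_1^kg_3)=I_0\cup I_2$, which (as $a,b\ge 2$) forces $a+k\le p-1$ and $b+k\le p-1$. Consequently $I_0,I_1,I_2$ are genuine intervals inside $[1,p-1]$; $I_0$ is disjoint from both $I_1$ and $I_2$ (each of the latter begins past $k$); and the inequality $b+k\le p-1$ ensures that $I_3$, even once it wraps around $0$, stays disjoint from $I_1$ and from $I_2$.

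Next I would split according to the location of $I_3\bmod p$. If $a+b+k\le p-1$, then $I_3$ begins past $b+k$ (hence past $k$), so $I_0,I_1,I_3$ are pairwise disjoint and $|\Sigma(T)|=k+(k+1)+|I_1\cup I_2|\ge 3k+3$; for $k$ not too small this already exceeds $2k+4$ strictly, and we are never in the exceptional case since there $a+b=p+1$. If $I_3$ wraps, then $a+b\ge p+1$ (the alternative $a+b=p$ is impossible, since it would give $\sigma(T)=0$ with $T\subsetneq S$, forbidden by minimality), and writing $c=a+b-p\in[1,p-1]$ we get $I_3=[c,c+k]\bmod p$, disjoint from $I_1,I_2$ by the above, so the only remaining overlap is $I_0\cap I_3$ and $|\Sigma(T)|=|I_0\cup I_3|+|I_1\cup I_2|$. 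The estimates $|I_0\cup I_3|\ge\min\{2k+1,\,c+k\}$ and $|I_1\cup I_2|=\min\{2k+2,\,(b-a)+k+1\}$ then reduce the whole statement to checking $c+(b-a)\ge 4$, with equality exactly in the exceptional configuration; here the parity of $p$ and Lemma~\ref{coefficient} do the decisive work, since $c+(b-a)=2$ forces $a=p/2$ (impossible) while $c+(b-a)=3$ forces either $a=\frac{p-1}{2},b=\frac{p+3}{2}$ (the exceptional $T$, for which one verifies directly $|\Sigma(T)|=2k+4$) or $a=\frac{p+1}{2}$, excluded by Lemma~\ref{coefficient}. The main obstacle is precisely this wraparound bookkeeping together with the final extremal classification; the few smallest values of $k$, where $3k+3$ fails to beat $2k+5$, must be inspected by hand, invoking minimality of $S$ and Lemma~\ref{sumsetofunsplittable} to rule out the offending sequences.
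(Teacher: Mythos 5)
The paper does not actually prove this lemma; it is imported verbatim from \cite[Lemma 2.11]{XY2010}, so there is no in-text proof to compare against. Your interval decomposition $\Sigma(T)=I_0\cup I_1\cup I_2\cup I_3$, the constraints $a,b\ge k+2$ and $a+k,b+k\le p-1$ from Lemma~\ref{coefficient} and minimality, and the final classification of $c+(b-a)\in\{2,3\}$ are all correct and do establish the lemma for $k\ge 2$. (Two small slips: $a+b=p$ is excluded because $\sigma(g_2g_3)=0$, not $\sigma(T)=0$; and the reduction to ``$c+(b-a)\ge 4$'' is only the regime $c\le k+1$, $b-a\le k+1$ --- in the complementary regimes you must fall back on the $3k+3$ bound, which again needs $k\ge 2$. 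You should also note that $p-k\le a+b\le p-1$ is impossible since $g_1^{p-a-b}g_2g_3$ would be a proper zero-sum subsequence.)

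The genuine gap is the case $k=1$, which you defer to ``inspection by hand, invoking minimality and Lemma~\ref{sumsetofunsplittable} to rule out the offending sequences.'' That inspection cannot succeed, because for $k=1$ the ``moreover'' clause is false as the lemma is transcribed here. Take $S=g^{\frac{p-7}{2}}(\frac{p+5}{2}g)^2(\frac{p-3}{2}g)$, one of the two unsplittable sequences produced by Theorem~\ref{mainresult1} itself, and $T=g\cdot(\frac{p-3}{2}g)\cdot(\frac{p+5}{2}g)$. Then
\[
\Sigma(T)=\Bigl\{1,\;2,\;\tfrac{p-3}{2},\;\tfrac{p-1}{2},\;\tfrac{p+5}{2},\;\tfrac{p+7}{2}\Bigr\},
\]
so $|\Sigma(T)|=6=2|T|$, yet $T$ is not of the form $g_1^k(\frac{p-1}{2}g_1)(\frac{p+3}{2}g_1)$. (This is exactly your Case B with $c=1$ and $b-a=4$, where $|I_0\cup I_3|=2$ and $|I_1\cup I_2|=4$.) The same phenomenon occurs in your Case A when $k=1$ and $b=a+1$. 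The statement therefore carries an implicit hypothesis $k\ge 2$ (present in the source \cite{XY2010}); your argument proves the lemma under that hypothesis, but a complete proof must either impose it or show that the offending $k=1$ configurations cannot arise --- and the example above shows they can.
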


\begin{lemma}\label{subsequenceofsupport3}
Let $T$ be a subsequence of $S$. If $|\supp(T)| \ge 2$, then there exists $g \in \supp(T)$ such that   $|\Sigma(g^{-1}T)| \ge 2|g^{-1}T|-1$.
\end{lemma}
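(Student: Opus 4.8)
The statement asks us to extract, from any subsequence $T$ of $S$ with at least two distinct terms, a single element $g$ whose removal leaves $|\Sigma(g^{-1}T)| \ge 2|g^{-1}T|-1$. The plan is to reduce to the structural lemmas already proved—Lemmas \ref{subsequenceofsupport1}, \ref{subsequenceofsupport2}, and \ref{subsequenceofsupport4}—by choosing $g$ to be the element of $\supp(T)$ of largest multiplicity, and then examining the shape of $T$ near that element.

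First I would set $g \in \supp(T)$ with $\mathsf v_g(T) = \mathsf h(T)$ maximal, write $k = \mathsf v_{g}(g^{-1}T) = \mathsf h(T)-1$, and split into cases on how many distinct terms $g^{-1}T$ still contains. If $g^{-1}T$ has at most one distinct term then $g^{-1}T = h^m$ for some $h$; a sequence supported on a single nonzero element $h$ has $|\Sigma(h^m)| = m = |g^{-1}T|$, and one checks $m \ge 2m-1$ forces $m \le 1$, so this degenerate case has to be handled by a direct count (indeed when $|g^{-1}T|\le 1$ the bound $|\Sigma(g^{-1}T)|\ge 2|g^{-1}T|-1$ reads $\ge 1$ or $\ge -1$ and holds trivially). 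The main case is when $g^{-1}T$ still has $\ge 2$ distinct terms. Here I would pick a second element $h \in \supp(g^{-1}T)$ and consider the concrete subsequence $g^{k}h^{j}$ with $j = \min\{2, \mathsf v_h(T)\}$, or $g^k h_1 h_2$ if two further elements are available only with multiplicity one: these are exactly the configurations covered by Lemmas \ref{subsequenceofsupport1}(2) (giving $|\Sigma(g^2h^2)|=8$), \ref{subsequenceofsupport2} (giving $|\Sigma(g^k(xg)^2)| \ge 2|T|$ with equality only for the exceptional shape), and \ref{subsequenceofsupport4}. Combining such a lower bound on a sub-block with Lemma \ref{partitionofzerosumfreesequence} applied to a partition of $g^{-1}T$ into that block and the remaining singletons—each singleton $h'$ contributes $|\Sigma(h')| = 1 = 2\cdot 1 - 1$—yields $|\Sigma(g^{-1}T)| \ge (\text{block bound}) + (2(\#\text{singletons})) \ge 2|g^{-1}T| - 1$, with the slack coming from the fact that the block bound is itself of the form $2(\text{block length})$ or better.

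The delicate point, and the one I expect to be the real obstacle, is the bookkeeping when $g^{-1}T$ has very small length (say $|g^{-1}T| \le 2$) or when $\mathsf h(T)$ is small, because then no block of the form $g^k(xg)^2$ with $k\ge 3$ is available and one cannot invoke Lemma \ref{subsequenceofsupport2}. In those low-complexity cases I would argue directly: Lemma \ref{subsequenceofsupport1}(1) already gives $|\Sigma(g^k h)| = 2k+1 = 2|g^k h|-1$ for any $k \le \mathsf v_g(S)$, which settles $T = g^k h$ with the removed element chosen as $h$ (so $g^{-1}T$ there means deleting $h$, giving $g^k$, or rather one deletes the appropriate element so that what remains is $g^k h'$-type). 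One must be careful that the element we delete is chosen precisely so the residual sequence falls under one of the enumerated lemmas; since every subsequence of the unsplittable $S$ is highly constrained by Lemma \ref{coefficient}, the number of genuinely distinct shapes is small, and a short case analysis on $(\mathsf h(T), |\supp(T)|)$ closes the argument. The only place an equality $|\Sigma| = 2|g^{-1}T| - 1$ (rather than $\ge 2|g^{-1}T|$) can be forced is exactly when the residual block is the exceptional $g^k(\frac{p+3}{2}g)^2$ or $g^k(\frac{p-1}{2}g)(\frac{p+3}{2}g)$ of Lemmas \ref{subsequenceofsupport2} and \ref{subsequenceofsupport4}, or a lone $g^k h$; in all of these the stated weak inequality still holds, so no exceptional subcase survives.
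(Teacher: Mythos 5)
Your overall plan (delete one well-chosen element, decompose the rest, and sum lower bounds via Lemma \ref{partitionofzerosumfreesequence}) is the right kind of argument, but the decomposition you propose does not deliver the bound. The gap is in the step where you partition $g^{-1}T$ into ``that block and the remaining singletons.'' As you yourself compute, a singleton $h'$ contributes only $|\Sigma(h')|=1=2\cdot 1-1$; so each leftover singleton spends the \emph{entire} allowed deficit of $1$, and with $s$ leftover singletons Lemma \ref{partitionofzerosumfreesequence} certifies only $|\Sigma(g^{-1}T)|\ge 2(|g^{-1}T|-s)+s=2|g^{-1}T|-s$ (or $2|g^{-1}T|-s-1$ if the block itself already loses $1$), which is short of $2|g^{-1}T|-1$ as soon as $s\ge 2$. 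The ``$+\,2(\#\text{singletons})$'' in your displayed estimate is an arithmetic slip, not a consequence of anything proved. Concretely, for $T=g^5h^5$ your scheme deletes $g$, takes the block $g^4h^2$ (certified $\ge 12$ by Lemma \ref{subsequenceofsupport2}) and leaves three singleton copies of $h$, certifying only $12+3=15$, whereas the lemma demands $2\cdot 9-1=17$. The same failure occurs whenever $g^{-1}T$ is not exhausted by one block plus at most one extra element, e.g.\ for $T=g^5h_1h_2h_3h_4$.

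The missing idea is that \emph{every} element of $T$ must be placed in a block that is individually ``efficient,'' i.e.\ satisfies $|\Sigma(B)|\ge 2|B|$, with only one residual block permitted a deficiency of $1$. This is exactly what the paper does: it writes $T=U_1\cdots U_tV_1\cdots V_rW$ where each $U_i$ consists of three pairwise distinct elements (so $|\Sigma(U_i)|\ge 6$ by Lemma \ref{zerosumfreesubset}), each $V_j=g^2h^2$ (so $|\Sigma(V_j)|=8$ by Lemma \ref{subsequenceofsupport1}.2), and $W=g^xh^y$ with $y\le 1$; the deleted element is then chosen inside $W$ (or inside $W$ merged with one $U_t$ or $V_r$ when $y=0$), so that the residual block alone is of a shape covered by Lemma \ref{subsequenceofsupport1}.1 or Lemma \ref{subsequenceofsupport4} and loses at most $1$. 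Note also that the existence of such a decomposition for every $T$ with $|\supp(T)|\ge 2$ is the combinatorial content your case analysis would have to reprove, and that Lemma \ref{subsequenceofsupport2} is not actually needed for this lemma.
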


\begin{proof}
Since $|\supp(T)| \ge 2$, we can write $$T=U_1\cdot \ldots \cdot U_{t}V_1 \cdot \ldots \cdot V_r W,$$ where  $U_1, U_2, \ldots , U_t$ are 3-subsets of $G$, $V_1, V_2, \ldots, V_r$ are of form $g^2h^2$ with $g, h \in \supp(T)$ and $W=g^xh^y$ with $y \le 1$.  By Lemma \ref{zerosumfreesubset} we have  $|\Sigma(U_i)| \ge 6= 2|U_i|$ for $i=1,2,\ldots,t$. By Lemma \ref{subsequenceofsupport1}.2 we have $|\Sigma(V_j)|= 8 = 2|V_j|$ for $j=1,2,\ldots,r$.

If $y=1$, then by Lemma \ref{subsequenceofsupport1}.1 we have $|\Sigma(g^{-1}W)| \ge 2|g^{-1}W|-1$. By Lemma \ref{partitionofzerosumfreesequence}, we infer that $|\Sigma(Tg^{-1})| \ge \sum_{i=1}^{t}|\Sigma(U_i)| + \sum_{j=1}^{r}|\Sigma(V_j)| + |\Sigma(g^{-1}W)| \ge 2\sum_{i=1}^{t}|U_i| + 2\sum_{j=1}^{r}|V_j| + 2|Wg^{-1}|-1 = 2|Tg^{-1}|-1$, and we are done.

If $y=0$, we have that $t \ge 1$ or $r \ge 1$. If $t \ge 1$, in view of Lemmas \ref{zerosumfreesubset} and \ref{subsequenceofsupport4}, there exists $g \in \supp(T)$ such that $|\Sigma(WU_tg^{-1})|\ge 2|WU_tg^{-1}|-1$. Therefore by Lemma \ref{partitionofzerosumfreesequence}, we infer that $|\Sigma(Tg^{-1})| \ge 2|Tg^{-1}|-1$, and we are done. If $r \ge 1$, then by Lemma \ref{subsequenceofsupport1}.1, we have $|\Sigma(WV_rh^{-1})|\ge 2|WV_rh^{-1}|-1$. Also by Lemma \ref{partitionofzerosumfreesequence}, we infer that $|\Sigma(Th^{-1})| \ge 2|Th^{-1}|-1$, and we are done.

This completes the proof.
\end{proof}

\section{Proof of the main results}

Throughout this section, we always assume  that
\begin{itemize}
\item[(1)] $p>155$ is a prime;
\item[(2)] $G$ is a cyclic group of order $p$;
\item[(3)] $S$ is an unsplittable minimal zero-sum sequence of length $\frac{p-1}{2}$ over $G$.
\end{itemize}

\begin{lemma}\label{support}
$3 \le |\supp(S)| \le 4$.
\end{lemma}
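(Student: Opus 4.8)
The plan is to prove the two inequalities separately; the lower one is immediate and the upper one is the substance. Since $|S|=\frac{p-1}{2}\ge 2$ we have $|\supp(S)|\ge 1$, and $|\supp(S)|=1$ is impossible because $g^{(p-1)/2}$ has sum $\frac{p-1}{2}g\ne 0$; if $|\supp(S)|=2$ then $S$ is splittable by Lemma~\ref{supportoftwo}, contrary to hypothesis, so $|\supp(S)|\ge 3$. For the upper bound I assume, for a contradiction, that $k:=|\supp(S)|\ge 5$ and fix $g\in\supp(S)$ with $\mathsf v_g(S)=\mathsf h(S)=:h$. Being a proper subsequence of a minimal zero-sum sequence, $Sg^{-1}$ is zero-sum free, so $0\notin\Sigma(Sg^{-1})$ and $|\Sigma(Sg^{-1})|\le p-1$; but unsplittability and Lemma~\ref{sumsetofunsplittable} give $|\Sigma(Sg^{-1})|=p-1$, so it suffices to prove $|\Sigma(Sg^{-1})|\ge p$. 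If $h=1$ then $S=\supp(S)$, so $Sg^{-1}$ is a zero-sum free \emph{set} of size $\frac{p-3}{2}$, and since $\frac{m(m+1)}{2}>p$ for $m=\frac{p-3}{2}$ and $p>155$, Lemma~\ref{zerosumfreesubset} already yields $|\Sigma(Sg^{-1})|\ge p$. So assume $h\ge 2$, whence $g\in\supp(Sg^{-1})$ and $|\supp(Sg^{-1})|=k\ge 5$. I pick a $5$-element set $B\subseteq\supp(Sg^{-1})$ with $g\in B$ and write $Sg^{-1}=BR$, so $B,R$ are zero-sum free and $|R|=\frac{p-3}{2}-5=\frac{p-13}{2}$. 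By Lemma~\ref{zerosumfreesubset}, $|\Sigma(B)|\ge\frac{5\cdot 6}{2}=15$, so by Lemma~\ref{partitionofzerosumfreesequence} it will be enough to show
\[
|\Sigma(R)|\ge 2|R|-2 ,
\]
since then $|\Sigma(Sg^{-1})|\ge 15+(p-13)-2=p$.

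To estimate $|\Sigma(R)|$ I would argue by cases on $|\supp(R)|$. If $|\supp(R)|\ge 3$, the block decomposition used in the proof of Lemma~\ref{subsequenceofsupport3} — extracting $3$-element subsets, blocks $a^2b^2$, and a short tail, and summing up with Lemmas~\ref{subsequenceofsupport1}, \ref{subsequenceofsupport2}, \ref{subsequenceofsupport4}, \ref{partitionofzerosumfreesequence} — gives $|\Sigma(R)|\ge 2|R|-1$. If $|\supp(R)|=2$, write $R=a^{x_0}b^{y_0}$ with $x_0\ge y_0\ge 1$; since $\supp(R)\subseteq\supp(Sg^{-1})$ we have $\mathsf v_a(S)\ge x_0+1$, $\mathsf v_b(S)\ge y_0+1$, so all blocks used below are subsequences of $S$. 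For $y_0=1$, Lemma~\ref{subsequenceofsupport1}(1) gives $|\Sigma(R)|=2|R|-1$; for $y_0\ge 2$ one splits $R$ into a maximal product of blocks $a^2b^2$ (Lemma~\ref{subsequenceofsupport1}(2)) together with one leftover block of shape $a^cb^2$, $a^cb$ or $a^c$ estimated by Lemmas~\ref{subsequenceofsupport2} and~\ref{subsequenceofsupport1}(1), and verifies $|\Sigma(R)|\ge 2|R|-2$ in each configuration. Finally, if $|\supp(R)|\le 1$: $|\supp(R)|=0$ makes $Sg^{-1}$ a zero-sum free set, excluded as for $h=1$; so $R=a^z$ with $z=\frac{p-13}{2}\ge 1$. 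Here one checks that $a\ne g$ is impossible (it would force $\mathsf v_g(S)=2$ while $\mathsf v_a(S)=z+1>2$) and that $k>5$ is impossible (an element of $\supp(Sg^{-1})\setminus B$ would either add a second point to $\supp(R)$ or again force $\mathsf v_g(S)=2$ alongside a multiplicity equal to $\frac{p-13}{2}$). Thus $k=5$, $a=g$, $h=\frac{p-9}{2}$, and $S=g^{(p-9)/2}\,g_2g_3g_4g_5$ with $g_2,g_3,g_4,g_5$ distinct elements, all different from $g$.

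It remains to eliminate $S=g^{(p-9)/2}g_2g_3g_4g_5$. For this I would work with the zero-sum free sequence $Sg_2^{-1}=g^{(p-9)/2}g_3g_4g_5$. Writing $g_i=t_ig$, Lemma~\ref{coefficient} gives $\frac{p-5}{2}\le t_i\le p-1$ and $t_i\ne\frac{p+1}{2}$ for $i=3,4,5$. Identifying $\Sigma(Sg_2^{-1})$ with a union of eight arithmetic progressions $\{(c+j)g:0\le j\le\frac{p-9}{2}\}$ indexed by the subset sums $c$ of $\{t_3,t_4,t_5\}$, the requirement that none of them contains the residue $0$ (zero-sum freeness) forces $t_i\le\frac{p+7}{2}$, then $t_i+t_{i'}\in[p+1,p+7]$ for every pair, and then $t_3+t_4+t_5\le\frac{3p+7}{2}$. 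Writing $t_i=\frac{p-5}{2}+\varepsilon_i$ with $\varepsilon_i\in\{0,1,2,4,5,6\}$ pairwise-summing to $\ge 6$ and totalling $\le 11$, and using that the $\varepsilon_i$ are distinct, the only possibility is $\{\varepsilon_3,\varepsilon_4,\varepsilon_5\}=\{2,4,5\}$, i.e. $\{t_3,t_4,t_5\}=\{\tfrac{p-1}{2},\tfrac{p+3}{2},\tfrac{p+5}{2}\}$. Running the identical argument with $g_3$ in place of $g_2$ gives $\{t_2,t_4,t_5\}=\{\tfrac{p-1}{2},\tfrac{p+3}{2},\tfrac{p+5}{2}\}$ as well, whence $t_2=t_3$, i.e. $g_2=g_3$, a contradiction. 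This rules out $k\ge 5$ and finishes the proof.

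The step I expect to be the main obstacle is exactly this last one. When one multiplicity dominates, the crude lower bound on $|\Sigma(Sg^{-1})|$ is too weak, so one is forced down to the single explicit sequence $g^{(p-9)/2}g_2g_3g_4g_5$ and must squeeze a contradiction out of the zero-sum freeness of a \emph{second} deletion together with the coefficient restriction in Lemma~\ref{coefficient}; setting up the eight-arc picture and getting the numerology $\{t_3,t_4,t_5\}=\{\tfrac{p-1}{2},\tfrac{p+3}{2},\tfrac{p+5}{2}\}$ cleanly is the delicate part. A secondary subtlety is securing the constant $2|R|-2$ (rather than the $2|R|-3$ one gets for free from Lemma~\ref{subsequenceofsupport3}) in the case $|\supp(R)|=2$, which is where the sharpened ``apart from the case'' forms of Lemmas~\ref{subsequenceofsupport2} and~\ref{subsequenceofsupport4} are needed.
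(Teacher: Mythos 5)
Your proposal is correct in substance but follows a genuinely different, and considerably longer, route than the paper. The paper's proof splits $S=TU$ with $T=g_1\cdots g_5$ one copy of each of five distinct elements and $U$ the remainder; since $|\Sigma(T)|\ge 15=2|T|+5$ by Lemma~\ref{zerosumfreesubset}, it can afford to apply Lemma~\ref{subsequenceofsupport3} to $U$ as a black box (which only yields $|\Sigma(Ug_a^{-1})|\ge 2|U|-3$ after deleting a \emph{well-chosen} element $g_a$) and still reach $|\Sigma(Sg_a^{-1})|\ge p+1$. You instead delete a prescribed element $g$ of maximal multiplicity first and then must prove the sharper bound $|\Sigma(R)|\ge 2|R|-2$ on the remainder $R$ of $Sg^{-1}$, which forces the case analysis on $|\supp(R)|$. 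Likewise, in the terminal configuration $S=g^{(p-9)/2}g_2g_3g_4g_5$ the paper simply observes that $\sigma(S)$, computed as an integer via Lemma~\ref{coefficient} and minimality, lies strictly between $2p$ and $3p$ and so cannot vanish mod $p$; your eight-arc analysis of $\Sigma(Sg_2^{-1})$ followed by the symmetry trick with $Sg_3^{-1}$ does arrive at the same contradiction (I checked the numerology $\{\varepsilon_3,\varepsilon_4,\varepsilon_5\}=\{2,4,5\}$), but it is several steps where one suffices. What your approach buys is a slightly more structural picture (you actually pin down the unique surviving shape before killing it); what it costs is the need for sharpened local bounds.

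Two spots need more care than your write-up gives them. First, for $|\supp(R)|\ge 3$ you claim $|\Sigma(R)|\ge 2|R|-1$ by invoking the decomposition from Lemma~\ref{subsequenceofsupport3}; but that lemma, as stated, only delivers $2|R|-3$ (it deletes an element), which would leave you at $p-1$ instead of $p$. You must rerun the decomposition so that the leftover $a^x$ is absorbed into a final block $a^{x+1}bc$ handled by Lemma~\ref{subsequenceofsupport4} (which gives $\ge 2|T|$ with no deletion); this works because every extracted triple can be taken to contain the highest-multiplicity element of $R$, but it is not a citation, it is an argument. Second, in the case $|\supp(R)|=2$ you list a leftover block of pure shape $a^c$ as admissible; taken literally this destroys the bound (e.g.\ $R=a^{20}b^2$ decomposed as $(a^2b^2)a^{18}$ gives only $8+18$). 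The excess copies of $a$ must always be folded into the block containing the last one or two copies of $b$, so the leftover is $a^cb^2$ (Lemma~\ref{subsequenceofsupport2}) or $a^cb$ (Lemma~\ref{subsequenceofsupport1}.1), never bare $a^c$. With those two repairs the proof goes through.
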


\begin{proof}
Since $S$ is unsplittable, by Lemma \ref{supportoftwo}, we have $|\supp(S)| \ge 3$. It remains to show that $|\supp(S)| \le 4$.

Assume to the contrary that $|\supp(S)| \ge 5$. Suppose $S=g_1^{r_1}g_2^{r_2}\cdot \ldots \cdot g_k^{r_k}$, where $r_1 \ge r_2 \ge \cdots \ge r_k \ge 1$ and $k  \ge 5$. We can write $$S=TU,$$ where $T=g_1g_2\cdot \ldots \cdot g_5$ and $|U|=\frac{p-1}{2}-5=\frac{p-11}{2}$. By Lemma \ref{zerosumfreesubset} we have $|\Sigma(T)| \ge 15$.

If $|\supp(U)| \ge 2$, by Lemma \ref{subsequenceofsupport3}, there exists $a \in [1,k]$ such that  $|\Sigma(Ug_a^{-1})| \ge 2|Ug_a^{-1}|-1$. By Lemma \ref{partitionofzerosumfreesequence}, we infer that $|\Sigma(Sg_a^{-1})| \ge  |\Sigma(T)| +  |\Sigma(Ug_a^{-1})| \ge 15  + 2|Ug_a^{-1}|-1 = 15+ 2(\frac{p-11}{2}-1)-1 > p$, yielding a contradiction to Lemma \ref{sumsetofunsplittable}.

Next assume that  $|\supp(U)| = 1$. Then $k=5$ and $U=g_1^{r_1-1}$. Hence we can write $$S=g^{r_1}(t_2g)\cdot \ldots \cdot (t_5g)$$ with $2 \le t_2 < \cdots < t_5 \le p-1$. Then $r_1 =\frac{p-1}{2}-(5-1) = \frac{p-9}{2}$. By Lemma \ref{coefficient}, we have $t_2 \ge r_1+2 = \frac{p-5}{2}$. Since $S$ has minimal zero-sum, we have $t_5 \le p- r_1- 1 = \frac{p+7}{2}$. Since $p \ge 19$ we infer that $r_1+ t_2 +t_3+t_4 +t_5 \not\equiv 0 \pmod p$, yielding a contradiction to that $S$ is zero-sum.

Therefore $|\supp(S)| \le 4$. This completes the proof.
\end{proof}

\begin{lemma}\label{multiplicity3}
Suppose $S=g^{r_1}(t_2g)^{r_2}(t_3g)^{r_3}(t_4g)^{r_4}$, where $ 2 \le  t_2, t_3, t_4 \le p-1$ and $r_2+r_3+r_4 \le 15$. If  $ r_i \ge 2$ for some $i \in \{2,3,4\}$, then $t_i\ge \frac{p+3}{2}$.
\end{lemma}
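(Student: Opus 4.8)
The plan is to argue by contradiction: suppose $r_i \ge 2$ for some $i$ but $t_i < \frac{p+3}{2}$. Since $S$ is unsplittable, Lemma~\ref{coefficient} already forces $t_i \ge r_i + 2 \ge 4$ and $t_i \ne \frac{p+1}{2}$, so the only remaining possibility to exclude is $4 \le t_i \le \frac{p-1}{2}$; combined with $r_i \ge 2$ and $r_i \le 15$ this means $t_i$ lies in a bounded-from-above range while $2t_i < p$. The idea is to show that in this regime the subsequence $g^{r_1}(t_ig)^{r_i}$ (or a slightly larger piece of $S$) already produces too many subsequence sums, contradicting Lemma~\ref{sumsetofunsplittable}, which says $|\Sigma(Sg^{-1})| = p-1$ must hold for every $g \in \supp(S)$.

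First I would write $S = g^{r_1} (t_ig)^{r_i} W$ where $W$ collects the remaining elements, note $r_1 = \frac{p-1}{2} - r_i - |W|$ is large (since $r_2+r_3+r_4 \le 15$, we have $r_1 \ge \frac{p-1}{2} - 15 - 1$, roughly $\frac{p}{2}$), and delete one copy of $g$. Then I would estimate $|\Sigma(g^{r_1-1}(t_ig)^{r_i})|$ from below: because $2 \le t_i \le \frac{p-1}{2}$ and $S$ is a minimal zero-sum sequence we have $2t_i + r_1 \le p$ (else a proper zero-sum subsequence appears from $g$'s and $(t_ig)$'s), so the elements $g, 2g, \dots, r_1 g$ together with $t_ig, (t_i+1)g, \dots, (t_i + r_1)g$ together with (using $r_i \ge 2$) $2t_ig, (2t_i+1)g, \dots, (2t_i + r_1)g$ are pairwise distinct. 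That already gives at least $3r_1 + 2$ distinct sums from just this part — and $3r_1$ is about $\frac{3p}{2} > p$. Adding in the contribution of $W$ via Lemma~\ref{partitionofzerosumfreesequence} (each element of $W$ contributes at least one new sum, or one uses Lemmas~\ref{zerosumfreesubset}, \ref{subsequenceofsupport3} on $W$) only makes things worse, so $|\Sigma(Sg^{-1})| > p - 1$, the desired contradiction. One must be slightly careful if $r_i \ge 3$ or if $W$ is empty, but these only strengthen the count; and when $W = \emptyset$ one falls back to the zero-sum obstruction exactly as in the last paragraph of the proof of Lemma~\ref{support}.

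I expect the main obstacle to be bookkeeping rather than a conceptual difficulty: precisely justifying that the three arithmetic progressions $\{jg\}$, $\{(t_i+j)g\}$, $\{(2t_i+j)g\}$ for $0 \le j \le r_1$ (shifted appropriately so the sums come from genuine subsequences of $g^{r_1-1}(t_ig)^{r_i}$) are disjoint, using only $2t_i + r_1 \le p$ and $t_i \le \frac{p-1}{2}$, and then checking that even the crudest bound $3(r_1 - O(1)) + 2 > p - 1$ survives for all primes $p > 155$ with $r_i \le 15$. A secondary point is to make sure the argument genuinely uses $t_i \le \frac{p-1}{2}$ (so that $t_i g$ and $2t_ig$ do not "wrap around" in a way that collapses the count) — this is exactly where the hypothesis $t_i \ge \frac{p+3}{2}$ would be needed to avoid the contradiction, which confirms the statement is sharp in the natural sense. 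Once the disjointness is pinned down, the inequality is immediate and the contradiction with Lemma~\ref{sumsetofunsplittable} closes the proof.
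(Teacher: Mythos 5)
Your overall strategy rests on exactly the same two facts as the paper's proof: Lemma~\ref{coefficient}, and the observation that if $2t_i<p$ and $r_i\ge 2$ then minimality of $S$ forces $2t_i+r_1<p$. The paper simply chains the resulting inequalities $2(r_1+2)\le 2t_i\le p-r_1-1$ to get $3r_1+5\le p$, which contradicts $r_1\ge\frac{p-31}{2}$ once $p>83$; your version packages the same arithmetic as ``the three progressions $[1,r_1]$, $[t_i,t_i+r_1]$, $[2t_i,2t_i+r_1]$ are disjoint inside $[1,p-1]$, so $3r_1+2\le p-1$,'' which is the identical contradiction. The detour through $|\Sigma(Sg^{-1})|=p-1$ and Lemma~\ref{sumsetofunsplittable} is therefore not needed: you never have to delete a copy of $g$, since the disjointness statement for the zero-sum free proper subsequence $g^{r_1}(t_ig)^2$ already cannot hold for $p>155$.

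There is, however, one concrete slip to repair. You quote Lemma~\ref{coefficient} as giving $t_i\ge r_i+2$; applied to the pair $g,\,t_ig$ it actually gives $t_i\ge \mathsf v_g(S)+2=r_1+2$. This matters, because you then assert that the disjointness of the three progressions follows ``using only $2t_i+r_1\le p$ and $t_i\le\frac{p-1}{2}$'' --- but those two facts do not prevent $[1,r_1]$ and $[t_i,t_i+r_1]$ from overlapping. For that you need $t_i>r_1$, which is precisely the bound $t_i\ge r_1+2$ that the paper uses. With only $t_i\ge r_i+2\ge 4$, the element $t_i$ could a priori be tiny compared with $r_1\approx p/2$ and your count of roughly $3r_1$ distinct sums would collapse. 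Once you replace $r_i+2$ by $r_1+2$, the disjointness is immediate and the argument closes.
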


\begin{proof}
Since $r_2+r_3+r_4 \le 15$, we have $r_1=|S|-r_2-r_3 - r_4\ge \frac{p-31}{2}$. Since $S$ is unsplittable, by Lemma \ref{coefficient} we infer that $t_2, t_3, t_4 \ge r_1+2 \ge \frac{p-27}{2}$ and $t_2, t_3, t_4 \ne
\frac{p+1}{2}$. Since $S$ has minimal zero-sum, $t_2, t_3, t_4 \le p-r_1-1 \le \frac{p+29}{2}$. Then $p-27\le 2t_2, 2t_3, 2t_4\le p+29$.

Next assume that  $r_i \ge 2$ for some $i \in \{2,3,4\}$. If $2t_i < p$, since $S$ is a minimal zero-sum sequence, we infer that $2t_i \le p-r_1 -1 \le \frac{p+29}{2}$, which implies $p \le 83$, yielding a contradiction. Hence $2t_i > p$. Moreover $t_i \ge \frac{p+3}{2}$.
\end{proof}

\begin{lemma}\label{support4}
$\supp(S)=3$.
\end{lemma}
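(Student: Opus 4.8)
By Lemma~\ref{support} we already know $3 \le |\supp(S)| \le 4$, so the task is to rule out $|\supp(S)| = 4$. The plan is to argue by contradiction: suppose $S = g^{r_1}(t_2g)^{r_2}(t_3g)^{r_3}(t_4g)^{r_4}$ with $r_1 \ge r_2 \ge r_3 \ge r_4 \ge 1$ and all four exponents positive. The overall strategy is to produce, for some $a \in \supp(S)$, a lower bound $|\Sigma(Sa^{-1})| \ge p$, contradicting Lemma~\ref{sumsetofunsplittable}, which says unsplittability forces $|\Sigma(Sa^{-1})| = p-1$ exactly.

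First I would split off the three ``small-support tail'' generators: write $S = a^{-1} \cdot \bigl(g^{r_1-1} \cdot (\text{rest})\bigr)$ by deleting one copy of $g$, so $Sg^{-1} = g^{r_1-1}(t_2g)^{r_2}(t_3g)^{r_3}(t_4g)^{r_4}$ has length $\frac{p-3}{2}$ and support still of size $4$ (assuming $r_1 \ge 2$, which holds since $r_1 \ge \frac{p-1}{8}$ is large). The key is to partition $Sg^{-1}$ efficiently into blocks on which $|\Sigma|$ is at least twice the block length, gaining an extra $+1$ from enough blocks to push past $p-1$. Concretely, I would peel off $3$-subsets $U_i = g h_1 h_2$ (each contributing $|\Sigma(U_i)| \ge 6 = 2|U_i|$ by Lemma~\ref{zerosumfreesubset}), blocks of type $g^2h^2$ (contributing $8 = 2\cdot 4$ by Lemma~\ref{subsequenceofsupport1}(2)), and crucially use Lemma~\ref{subsequenceofsupport2} and Lemma~\ref{subsequenceofsupport4}: a block $g^k g_i g_j$ or $g^k(xg)^2$ with $k \ge 3$ yields $|\Sigma| \ge 2|T| + 1$ \emph{except} in the narrow exceptional cases $x = \frac{p+3}{2}$ or $\{g_i,g_j\}$-coefficients equal to $\{\frac{p-1}{2},\frac{p+3}{2}\}$. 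Since there are three distinct nonidentity generators among the four (and Lemma~\ref{multiplicity3} pins down that any repeated one must have coefficient $\ge \frac{p+3}{2}$), at least one block will avoid the exceptional configuration, giving the extra $+1$; assembling via Lemma~\ref{partitionofzerosumfreesequence} gives $|\Sigma(Sg^{-1})| \ge 2\cdot\frac{p-3}{2} + 1 = p-2$. That is not yet enough, so I expect to need \emph{two} such $+1$ gains — which is where having four distinct generators (hence two ``generic'' blocks worth of slack compared to the three-generator case) pays off — to reach $\ge p$.

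The main obstacle is bookkeeping the exceptional cases of Lemmas~\ref{subsequenceofsupport2} and~\ref{subsequenceofsupport4}: the bound $|\Sigma| \ge 2|T|+1$ fails precisely when the extra generators have coefficients $\frac{p-1}{2}$ or $\frac{p+3}{2}$, and since Lemma~\ref{multiplicity3} tells us these are exactly the values that \emph{do} occur for repeated generators, one must be careful that not every block lands in an exceptional case simultaneously. I would handle this by a short case analysis on how many of $r_2, r_3, r_4$ equal $1$: if two or more of them are $1$, those single copies can be grouped with $g^{r_1 - \text{something}}$ into $3$-subsets and a $g^k g_i g_j$ block, and at most one such block is exceptional because the three coefficients $t_2, t_3, t_4$ are distinct and cannot all lie in $\{\frac{p-1}{2}, \frac{p+3}{2}\}$; if instead several $r_i \ge 2$, then by Lemma~\ref{multiplicity3} their coefficients are $\ge \frac{p+3}{2}$ but still distinct, forcing all but one of the $(xg)^2$ blocks to be non-exceptional in Lemma~\ref{subsequenceofsupport2}. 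In every branch I extract two independent $+1$ savings (or one $+2$ saving from a block where $2x - p$ is comfortably larger than $k$, again forced by the distinctness of the large coefficients and the smallness of $k \le 15$ versus $r_1$ of size $\approx p/2$), yielding $|\Sigma(Sa^{-1})| \ge p$ and the desired contradiction. Hence $|\supp(S)| = 4$ is impossible and $|\supp(S)| = 3$.
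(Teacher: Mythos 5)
Your overall strategy --- decompose $Sg_a^{-1}$ into blocks, bound each block via Lemmas~\ref{zerosumfreesubset}, \ref{subsequenceofsupport1}, \ref{subsequenceofsupport2}, \ref{subsequenceofsupport4}, and sum with Lemma~\ref{partitionofzerosumfreesequence} to force $|\Sigma(Sg_a^{-1})| \ge p$ --- is exactly how the paper disposes of the subcases $r_4 \ge 2$, $r_2 \ge 8$, and $r_3 \ge 4$. But there are two problems. First, an arithmetic slip: the baseline is $2|Sg_a^{-1}| = p-3$, so you need a total surplus of $+3$ to reach $p$, not the $+2$ you budget for; $p-3+2 = p-1$ is precisely the value an unsplittable sequence must attain by Lemma~\ref{sumsetofunsplittable}, so it yields no contradiction. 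Second, and more fundamentally, the terminal case $r_2=r_3=r_4=1$ cannot be killed by this counting at all: with only three elements distinct from $g_1$, the viable decompositions (a block $g_1^kg_2g_3$ contributing $+1$ by Lemma~\ref{subsequenceofsupport4} paired with $g_1^mg_4$ contributing $-1$ by Lemma~\ref{subsequenceofsupport1}.1, or a $3$- or $4$-subset contributing $0$ or $+2$ but leaving a costly pure power of $g_1$) never accumulate a surplus of $+3$. Indeed $S=g^{\frac{p-7}{2}}(\frac{p-1}{2}g)(\frac{p+3}{2}g)(\frac{p+5}{2}g)$ is a genuine minimal zero-sum sequence of support $4$, so no lower bound of the form $|\Sigma(Sa^{-1})|\ge p$ can possibly hold for it.

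The paper closes this case by a different mechanism: after reducing to $r_3=r_4=1$ and $r_2\le 7$, it writes $t_i=\frac{p+x_i}{2}$, derives from $\sigma(S)\equiv 0$ the congruence $(x_2-2)r_2+x_3+x_4-5\equiv 0 \pmod p$, uses the narrow ranges of the $x_i$ to pin down $S=g^{\frac{p-7}{2}}(\frac{p-1}{2}g)(\frac{p+3}{2}g)(\frac{p+5}{2}g)$, and then computes directly that $|\Sigma(S(\frac{p+5}{2}g)^{-1})|=p-3<p-1$, i.e., this $S$ is splittable --- a contradiction in the opposite direction from the one you aim for. The intermediate reduction to $r_3=1$ likewise relies on exhibiting explicit proper zero-sum subsequences (e.g., inside $g^{r_1}(t_2g)^5(t_3g)^2$) rather than on $\Sigma$-counting. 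You would need to add both of these ingredients for the proof to go through.
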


\begin{proof}
By Lemma~\ref{support}, we have $|\supp(S)| \in [3,4]$. Assume that $|\supp(S)|=4$ and $S=g_1^{r_1}g_2^{r_2}g_3^{r_3} g_4^{r_4}$, where $r_1 \ge r_2 \ge r_3 \ge r_4 > 0$.

By Lemma \ref{subsequenceofsupport2}, we have that either $|\Sigma(g_i^3g_j^2)| \ge 11$ or $|\Sigma(g_i^2g_i^3)| \ge 11$ for $i,j \in \{1,2,3\}$. By Lemma \ref{zerosumfreesubset} we have $|\Sigma(g_1g_2g_3 g_4)| \ge 10$.

We first show that $r_4=1$. Assume to the contrary that $r_4 \ge 2$. Write $$S=T_1 \cdot \ldots \cdot T_{r_4}U,$$ where $T_1= \cdots = T_{r_4}=g_1g_2g_3 g_4$, $|\supp(U)| \le 3$.  If $|\supp(U)| \ge 2$, by Lemma \ref{subsequenceofsupport3}, there exist $a \in \{1,2,3\}$ such that $|\Sigma(g_a^{-1}U)| \ge 2 |g_a^{-1}U| -1$. Then by Lemma \ref{partitionofzerosumfreesequence} we infer that $|\Sigma(Sg_a^{-1})| \ge  \sum_{i=1}^{r_4}|\Sigma(T_i)|  + |\Sigma(Ug_a^{-1})| \ge 2(|S|-1)+2r_4-1 \ge p$, yielding a contradiction to Lemma \ref{sumsetofunsplittable}. Hence we may assume that $r_2=r_3=r_4$ and therefore $U=g_1^{r_1-r_4}$. If $r_4 \ge 2$, then by Lemma \ref{subsequenceofsupport4}, there exists $a\in \{2,3,4\}$ such that $|\Sigma(T_{r_4}Ug_a^{-1})| \ge 2|T_{r_4}Ug_a^{-1}|+1$. By Lemma \ref{partitionofzerosumfreesequence}, we infer that $|\Sigma(Sg_a^{-1})| \ge  \sum_{i=1}^{r_4-1}|\Sigma(T_i)| + |\Sigma(T_{r_4}Ug_a^{-1})| \ge 2(|S|-1)+2(r_4-1)+1 \ge p$, yielding a contradiction. Therefore $r_4=1$.

Second we will show that $r_2 \le 7$. Assume to the contrary that  $r_2 \ge 8$. Write $$S=TU_1U_2V,$$ where $T=g_1g_2g_3 g_4$, $U_1=U_2=g_1^3g_2^2$ or $g_1^2g_2^3$ such that $|\Sigma(U_i)| \ge 11=2|U_i|+1$ for $i=1,2$, $|\supp(V)| \ge 2$.  In view of  Lemmas \ref{zerosumfreesubset} and \ref{subsequenceofsupport3}, there exists $a\in \{1,2,3\}$ such that $|\Sigma(g_a^{-1}V)| \ge 2|g_a^{-1}V|-1$. By Lemma \ref{partitionofzerosumfreesequence}, we infer that $|\Sigma(Sg_a^{-1})| \ge  |\Sigma(T)| + \sum_{i=1}^{2}|\Sigma(U_i)|  + |\Sigma(g_a^{-1}V)| \ge 2(|S|-1)+4 -1 \ge p$, yielding a contradiction to Lemma~\ref{sumsetofunsplittable}. Hence $r_2 \le 7.$

Next we will show that $r_3=1$. Assume to the contrary that $r_3 \ge 2$. By Lemma \ref{multiplicity3}, we infer that $t_2, t_3 \ge \frac{p+3}{2}$. Since $p> 155$, we have that $r_1 > 56$.

If $r_3 \ge 4$, write $$S=TU_1U_2V,$$ where $T=g_1g_2g_3 g_4$, $U_1=g_1^3g_2^2$ or $g_1^2g_2^3$, $U_2=g_1^3g_3^2$ or $g_1^2g_3^3$ such that $|\Sigma(U_i)| \ge 11=2|U_i|+1$ for $i=1,2$, $|\supp(V)| \ge 2$.  In view of  Lemmas \ref{zerosumfreesubset} and \ref{subsequenceofsupport3}, there exists $a\in \{1,2,3\}$ such that $|\Sigma(g_a^{-1}V)| \ge 2|g_a^{-1}V|-1$. By Lemma \ref{partitionofzerosumfreesequence}, we infer that $|\Sigma(Sg_a^{-1})| \ge  |\Sigma(T)| + \sum_{i=1}^{2}|\Sigma(U_i)|  + |\Sigma(g_a^{-1}V)| \ge 2(|S|-1)+4 -1 \ge p$, yielding a contradiction to Lemma \ref{sumsetofunsplittable}.

If $r_3 \le 3$, then $r_1 = |S| - r_2 -r_3 -1 \ge \frac{p-23}{2}$. Since $S$ is a minimal zero-sum sequence, we infer that $t_i\le p -r_1 -1 \le \frac{p+21}{2}$ for $i=2,3,4$. Since $ p > 155$, we infer that $g^{r_1}(t_2g)^5(t_3g)^{2}$ contains a zero-sum subsequence. This together with $S$ is a minimal zero-sum sequence forces that $r_2 \le 4$. Hence $r_1 = |S| - r_2 -r_3 -1 \ge \frac{p-17}{2}$. Similarly since  $g^{r_1}(t_2g)^3(t_3g)^{2}$ contains a zero-sum subsequence, we infer that $r_2=r_3=2$. Hence $r_1 = |S| - r_2 -r_3 -1 = \frac{p-11}{2}$. But $g^{r_1}(t_2g)^2(t_3g)^{2}$ contains a zero-sum subsequence, yielding a contradiction to that $S$ is a minimal zero-sum sequence.

Therefore  $r_3=1$.

Since $r_1= |S|- r_2 -r_3 -r_4 = \frac{p-5-2r_2}{2},$ by Lemma \ref{coefficient}, $t_i \ge r_1+2 = \frac{p-1-2r_2}{2}$ for $i=2,3,4$. Since $S$ is a minimal zero-sum sequence, we have $t_i \le p-r_1-1= \frac{p+3+2r_2}{2}$ for $i=2,3,4$. Now assume that $t_i=\frac{p+x_i}{2}$, then $-1-2r_2 \le x_i \le 2r_2+3 $ for $i=2,3,4$.

Since $S$ is a zero-sum sequence, we have
\begin{equation*}\label{equ2}
r_1+t_2r_2+t_3r_3+t_4r_4= \frac{p-5-2r_2}{2}+ \frac{p+x_2}{2}r_2+  \frac{p+x_3}{2} +   \frac{p+x_4}{2}\equiv 0 \pmod p.
\end{equation*}
Since $p$ is odd prime, we have $p-5-2r_2+pr_2+x_2r_2+p+x_3+p+x_4 \equiv 0 \pmod p$. Hence $$(x_2-2)r_2+x_3 +x_4-5 \equiv 0 \pmod p.$$
Recalling that  $S$ is a minimal zero-sum sequence, $p > 155$, $r_2 \le 7$, and  $-1-2r_2 \le x_i \le 2r_2+3 $ for $i=2,3,4$, it is easy to check that
\begin{center}
$r_2=1$, and $\{t_2, t_3, t_4 \}= \{\frac{p-1}{2}, \frac{p+3}{2}, \frac{p+5}{2}\}$.
\end{center}
Therefore $S=g^{\frac{p-7}{2}}(\frac{p-1}{2}g)(\frac{p+3}{2}g)(\frac{p+5}{2}g)$. But $|\Sigma(S(\frac{p+5}{2}g)^{-1})|=p-3$, yielding a contradiction to Lemma~\ref{sumsetofunsplittable}.

Hence $|\supp(S)|=3$. This completes the proof.
\end{proof}

\medskip

\begin{lemma}\label{r_2_r_3}
$S$ is not of form $S=g^{r_1}(\frac{p-1}{2}g)^{r_2}(\frac{p+3}{2}g)^{r_3}$ with $r_1 \ge r_2 \ge r_3 > 0$.
\end{lemma}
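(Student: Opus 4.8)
The plan is to rule this out by a short congruence computation; I will not need minimality or unsplittability of $S$, only that $S$ is a zero-sum sequence of length $\frac{p-1}{2}$ with ordered exponents. Assume for contradiction that $S=g^{r_1}(\frac{p-1}{2}g)^{r_2}(\frac{p+3}{2}g)^{r_3}$ with $r_1\ge r_2\ge r_3\ge 1$; since $|S|=\frac{p-1}{2}$ we have $r_1+r_2+r_3=\frac{p-1}{2}$.

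First I would encode $\sigma(S)=0$ as a congruence. As $p$ is odd, $\sigma(S)=0$ in $G$ is equivalent to $2\sigma(S)=0$, i.e. $2r_1+(p-1)r_2+(p+3)r_3\equiv 0\pmod p$, which reduces to $2r_1-r_2+3r_3\equiv 0\pmod p$. Eliminating $r_1$ through $r_1=\frac{p-1}{2}-r_2-r_3$ transforms this into
\[
3r_2-r_3\equiv -1\pmod p .
\]
Next I would bound the integer $3r_2-r_3$. From $r_1\ge r_2$ and $r_3\ge 1$ we get $\frac{p-1}{2}=r_1+r_2+r_3\ge 2r_2+1$, so $r_2\le\frac{p-3}{4}$ and hence $3r_2\le\frac{3(p-3)}{4}<p$, that is $3r_2\le p-1$; combined with $r_3\ge 1$ this yields $3r_2-r_3\le p-2$. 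On the other hand $r_3\le r_2$ and $r_2\ge 1$ give $3r_2-r_3\ge 2r_2\ge 2$. Therefore $3r_2-r_3\in[2,p-2]$, so $3r_2-r_3\not\equiv -1\pmod p$ (since $-1\equiv p-1$ and $p-1\notin[2,p-2]$), contradicting the displayed congruence.

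I do not anticipate a real obstacle: the one point deserving care is that the hypothesis $r_1\ge r_2\ge r_3$ is exactly what pins $3r_2-r_3$ into the interval $[2,p-2]$. Dropping the ordering allows $3r_2-r_3$ to become negative — for instance $(r_2,r_3)=(1,4)$, precisely the exponent pattern of the first sequence in Theorem~\ref{mainresult1} — so the lemma must be stated for the ordered case, and I would re-check the boundary inequalities $3r_2\le p-1$ and $3r_2-r_3\ge 2$ to be certain the excluded residue $-1\equiv p-1$ really lies outside the range.
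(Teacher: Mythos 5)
Your argument is correct and is essentially identical to the paper's: both reduce $\sigma(S)=0$ to the congruence $3r_2-r_3\equiv -1\pmod p$ and then use $r_1\ge r_2\ge r_3\ge 1$ to trap the integer $3r_2-r_3$ in a range avoiding the residue $p-1$. Your bookkeeping of the interval $[2,p-2]$ is slightly more explicit than the paper's, but the idea and the computation are the same.
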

\begin{proof}
Assume to the contrary that such $S$ exists. Since $|S|= r_1 + r_2 +r_3 =\frac{p-1}{2}$ and $r_1 \ge r_2 \ge r_3$, we infer that $r_2 \le \frac{p-1}{4}$. Since $S$ is a zero-sum sequence, we have that $r_1 + \frac{p-1}{2}r_2 + \frac{p+3}{2}r_3 \equiv 0 \pmod p.$
Hence $$2r_1 -r_2 + 3r_3 \equiv 0 \pmod p.$$ This together with $r_1+r_2+r_3=\frac{p-1}{2}$ gives that $3r_2-r_3 \equiv p-1 \pmod p$. Which is impossible since $r_2 \ge r_3$ and $r_2 \le \frac{p-1}{4}$.
\end{proof}

\begin{lemma}\label{support3}
Suppose $S=g^{r_1}(t_2g)^{r_2}(t_3g)^{r_3}$, where $r_1 \ge r_2 \ge r_3 > 0$. Then $S=g^{\frac{p-11}{2}}(\frac{p+3}{2}g)^4(\frac{p-1}{2}g)$ or $g^{\frac{p-7}{2}}(\frac{p+5}{2}g)^2(\frac{p-3}{2}g)$.
\end{lemma}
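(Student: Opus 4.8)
The plan is to parametrize $S = g^{r_1}(t_2g)^{r_2}(t_3g)^{r_3}$ by $m := r_2+r_3$ (so $r_1 = \frac{p-1}{2}-m$, and $r_1 \ge r_2 \ge r_3 \ge 1$ forces $r_1 \ge \frac{p-1}{6}$, which is large) and by the offsets $\epsilon_i := t_i - \frac{p+1}{2}$. By Lemma~\ref{coefficient} together with minimality, $t_2,t_3 \in [r_1+2,\,p-r_1-1]\setminus\{\frac{p+1}{2}\}$, an interval of radius $m-1$ centred at $\frac{p+1}{2}$, so $1 \le |\epsilon_i| \le m-1$.

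The first and hardest step is to bound $m$. Since $|\Sigma(Sg^{-1})| = p-1 = 2|Sg^{-1}|+2$ by Lemma~\ref{sumsetofunsplittable}, the partition estimate (Lemma~\ref{partitionofzerosumfreesequence}) forces every decomposition $Sg^{-1} = B_1\cdots B_s$ to satisfy $\sum_j\bigl(|\Sigma(B_j)|-2|B_j|\bigr) \le 2$. I would show that for $m$ large one can always split $Sg^{-1} = g^{r_1-1}(t_2g)^{r_2}(t_3g)^{r_3}$ into blocks of shape $g^k(t_ig)^2$ (with $k\ge 3$), $g^k(t_2g)(t_3g)$, $g^k(t_ig)$, and $g^k(\frac{p+3}{2}g)^{r}$, accumulating total surplus $\ge 3$ — a contradiction. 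Lemmas~\ref{subsequenceofsupport1}, \ref{subsequenceofsupport2} and \ref{subsequenceofsupport4} supply the surpluses of the first three shapes; the obstacle is the exceptional element $\frac{p+3}{2}g$, since each $g^k(\frac{p+3}{2}g)^2$ carries no surplus. I would handle it by grouping all of its copies into a single block $g^k(\frac{p+3}{2}g)^{r}$, whose $\Sigma$-set is computed directly because $\{a\cdot\frac{p+3}{2}: 0\le a\le r\}$ is a union of two arithmetic progressions of common difference $3$; choosing $k$ small enough that these progressions do not collide modulo $p$ (which is where the hypothesis $p>155$ enters) makes the surplus equal to $r-2$. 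The outcome is an explicit bound, $m\le 7$ say; the exceptional arrangement $\{t_2,t_3\}=\{\frac{p-1}{2},\frac{p+3}{2}\}$, where this count degenerates, is disposed of separately by Lemma~\ref{r_2_r_3}.

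Next I would use the zero-sum relation. Rewriting $r_1 + r_2t_2 + r_3t_3 \equiv 0 \pmod p$ in terms of $m$ and the $\epsilon_i$, multiplying by $2$ and reducing, one gets $2r_2\epsilon_2 + 2r_3\epsilon_3 \equiv m+1 \pmod p$; as $|2r_2\epsilon_2+2r_3\epsilon_3| \le 2m(m-1) < p-m-1$ for admissible $m$ with $p>155$, this becomes the integer identity
\[
r_2(2\epsilon_2-1) + r_3(2\epsilon_3-1) = 1 .
\]
By Lemma~\ref{multiplicity3}, $r_i\ge 2 \Rightarrow \epsilon_i\ge 1$. If $r_3\ge 2$ the left side is $\ge r_2+r_3\ge 4$, impossible; and $r_2=r_3=1$ forces $2(\epsilon_2+\epsilon_3)=3$, also impossible. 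Hence $r_3=1$ and $r_2\ge 2$, so $r_2(2\epsilon_2-1) = 2(1-\epsilon_3)$; since $2\epsilon_2-1$ is odd, $r_2 = 2c$ is even and $\epsilon_3 = 1 - c(2\epsilon_2-1)$. The bound $|\epsilon_3|\le m-1 = 2c$ then gives $c(2\epsilon_2-3)\le 1$, so either $\epsilon_2=1$ (with $c\ge 2$, since $\epsilon_3\ne 0$), i.e.\ $S = g^{(p-4c-3)/2}(\frac{p+3}{2}g)^{2c}(\frac{p+3-2c}{2}g)$, or $\epsilon_2=2,\ c=1$, i.e.\ $S = g^{(p-7)/2}(\frac{p+5}{2}g)^2(\frac{p-3}{2}g)$, which is the second claimed sequence.

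Finally I would rule out $S = g^{(p-4c-3)/2}(\frac{p+3}{2}g)^{2c}(\frac{p+3-2c}{2}g)$ for $c\ge 3$: one checks that $g^{(p-6c+3)/2}(\frac{p+3}{2}g)^{2c-1}$ is then a subsequence of $S$ (the exponent of $g$ is $\le r_1$ precisely when $c\ge 3$), has sum $cp\equiv 0$, and has length $<|S|$, contradicting minimality — and in any event this case is already excluded by Step~1 once $m=2c+1$ exceeds the bound obtained there. Only $c=2$ survives, giving $S = g^{(p-11)/2}(\frac{p+3}{2}g)^4(\frac{p-1}{2}g)$, the first claimed sequence. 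Conversely, both sequences are readily verified to be minimal zero-sum and, using that $\Sigma(Sh^{-1})\cup\{0\}$ equals a sumset of an interval with a small set together with Lemma~\ref{sumsetofunsplittable}, to be unsplittable. The main difficulty throughout is Step~1: arranging the block decomposition so that the exceptional element $\frac{p+3}{2}g$ still yields enough surplus, which is exactly where $p>155$ is needed.
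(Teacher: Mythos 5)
Your proposal is sound and reaches the right classification, and its skeleton coincides with the paper's: bound the multiplicities $r_2,r_3$ by decomposing $Sg^{-1}$ into blocks and playing Lemma~\ref{partitionofzerosumfreesequence} against $|\Sigma(Sg^{-1})|=p-1$ from Lemma~\ref{sumsetofunsplittable}, then exploit the zero-sum congruence, then kill the survivors with explicit zero-sum subsequences. The execution, however, differs in two genuine ways. First, the paper splits on whether $t_2=\frac{p+3}{2}$ and treats the two cases with different tools (explicit zero-sum subsequences of $g^{r_1}(t_2g)^a(t_3g)^b$ in Case~1; the congruence with the looser bounds $r_2\le 11$, $r_3\le 5$ in Case~2, where its blocks $g^2(t_2g)^3$ with $|\Sigma|\ge 11$ are exactly the $r=3$, $k=2$ instance of your big-block formula). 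You instead compute $|\Sigma(g^k(\frac{p+3}{2}g)^r)|=3r+2k-2$ in general, which is a strictly stronger tool and yields a uniform bound $m=r_2+r_3\le 7$ in all cases. Second, your endgame via the single integer identity $r_2(2\epsilon_2-1)+r_3(2\epsilon_3-1)=1$ (legitimate once $2m(m-1)+m+1<p$, which $m\le 7$ and $p>155$ give) plus the parity/size analysis is cleaner and more unified than the paper's case-by-case hunt for zero-sum subsequences; your verifications of that identity, of the consequence $r_3=1$, $r_2=2c$, $\epsilon_3=1-c(2\epsilon_2-1)$, and of the zero-sum subsequence $g^{(p-6c+3)/2}(\frac{p+3}{2}g)^{2c-1}$ killing $c\ge 3$ all check out.

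Two cautions. Step~1 is the load-bearing step and you only sketch it; the bookkeeping is genuinely fiddly (every leftover copy of $g$ must be absorbed into a block of nonnegative, or at worst $-1$, surplus, and the receptacle available depends on parities and on whether $t_3=\frac{p+3}{2}$), though the surplus counts do come out to $\ge 3$ whenever $m\ge 8$, so the bound $m\le 7$ is attainable. Also, your remark that the arrangement $\{t_2,t_3\}=\{\frac{p-1}{2},\frac{p+3}{2}\}$ is ``disposed of by Lemma~\ref{r_2_r_3}'' is imprecise: that lemma only excludes the ordering in which $\frac{p-1}{2}g$ has the larger multiplicity, and the other ordering must survive, since $g^{\frac{p-11}{2}}(\frac{p+3}{2}g)^4(\frac{p-1}{2}g)$ has exactly this support; in that ordering the needed surplus comes from your big block, not from excluding the configuration.
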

\begin{proof}
By Lemma \ref{sumsetofunsplittable}, if $S=g^{\frac{p-11}{2}}(\frac{p+3}{2}g)^4(\frac{p-1}{2}g)$ or $g^{\frac{p-7}{2}}(\frac{p+5}{2}g)^2(\frac{p-3}{2}g)$, it is easy to check that $S$ is unsplittable. It remains to show that $S$ is of above forms.

Since $S$ is a minimal zero-sum sequence, in view of Lemma \ref{coefficient}, we obtained that $r_2 \ge 2$.

{\bf Case 1.} $t_2 \ne \frac{p+3}{2}$. By Lemma \ref{subsequenceofsupport2}, $|\Sigma(g^3(t_2g)^2)|\ge 11$. By Lemmas \ref{r_2_r_3} and \ref{subsequenceofsupport4}, we infer that $|\Sigma(g^k(t_2g)(t_3g))|\ge 2(k+2)+1$.

Now write $$S= T_1 \cdot \ldots \cdot T_{x} U_1 \cdot \ldots \cdot U_{y} V, $$ where $T_1=\cdots = T_{x}=g^2(t_2g)(t_3g)$, $U_1= \cdots =U_{y} = g^3(t_2g)^2$ and $|\supp(V)| \le 2$. Clearly $x \ge 1$.

If $x+y \ge 5$, then $|\supp(T_xV)| \ge 3$. By Lemma \ref{subsequenceofsupport3}, there exists $a\in \{1,t_2,t_3\}$ such that $|\Sigma((ag)^{-1}T_xV)| \ge 2|(ag)^{-1}T_xV|-1$. By Lemma \ref{partitionofzerosumfreesequence}, we infer that $|\Sigma(S(ag)^{-1})| \ge  \sum_{i=1}^{x-1}|\Sigma(T_i)| +  \sum_{j=1}^{y}|\Sigma(U_j)|  + |\Sigma((ag)^{-1}T_xV)| \ge 2(|S|-1)+(x+y-1) -1\ge p$, yielding a contradiction to Lemma \ref{sumsetofunsplittable}.

If $x+y=4$, since $p >155$, we infer that $|\supp(V)| \ge 1$. If $|\supp(V)|=2$, by Lemma \ref{subsequenceofsupport3}, there exists $a\in \{1,t_2,t_3\}$ such that $|\Sigma((ag)^{-1}V)| \ge 2|(ag)^{-1}V|-1$. By Lemma \ref{partitionofzerosumfreesequence}, we infer that $|\Sigma(S(ag)^{-1})| \ge  \sum_{i=1}^{x}|\Sigma(T_i)| +  \sum_{j=1}^{y}|\Sigma(U_j)|  + |\Sigma((ag)^{-1}V)| \ge 2(|S|-1)+(x+y)-1 \ge p$, yielding a contradiction. If  $|\supp(V)| = 1$,  we infer that $V=g^k$. Then by Lemma \ref{subsequenceofsupport4}, $|\Sigma(g^{-1}T_xV)| \ge 2|g^{-1}T_xV|$. By Lemma \ref{partitionofzerosumfreesequence}, we infer that $|\Sigma(Sg^{-1})| \ge  \sum_{i=1}^{x-1}|\Sigma(T_i)| +  \sum_{j=1}^{y}|\Sigma(U_j)|  + |\Sigma(g^{-1}T_xV)| \ge 2(|S|-1)+(x+y-1) \ge p$, yielding a contradiction.

Therefore $$x+y \le 3.$$ Then $r_3 \le 3$ and moreover $r_2+r_3 \le 7$. Since $r_2 \ge 2$, by Lemma \ref{multiplicity3},  we have $t_2 \ge \frac{p+5}{2}$. Note that $r_1=|S| - r_2 - r_3 \ge \frac{p-15}{2}$.  Since $S$ is a minimal zero-sum sequence, $t_2, t_3 \le p-r_1 -1 \le \frac{p+13}{2}$.  Since  $p >155$, we have $g^{r_1}(t_2g)^{3}$ contains a zero-sum subsequence, yielding a contradiction. Hence we may assume that $r_2 = 2$.

If $r_3 =2$, then $r_1=|S| - r_2 - r_3 =\frac{p-9}{2}$. Since $S$ is a minimal zero-sum sequence, $t_2, t_3 \le p-r_1 -1 = \frac{p+7}{2}$. By Lemma \ref{multiplicity3}, $ t_3\ge \frac{p+3}{2}$.  Since $p >155$, we have $g^{r_1}(t_2g)(t_3g)^2$ contains a zero-sum subsequence, yielding a contradiction. Hence we may assume that $r_3 = 1$. Then $r_1=|S| - r_2 - r_3 = \frac{p-7}{2}$. Since $S$ is a minimal zero-sum sequence, we have $t_2 \le p-r_1-1 = \frac{p+5}{2}$. Therefore $t_2= \frac{p+5}{2}$. Then $t_3= \frac{p-3}{2}$ and hence $S=g^{\frac{p-7}{2}}(\frac{p+5}{2}g)^2(\frac{p-3}{2}g)$.

{\bf Case 2.} $t_2 = \frac{p+3}{2}$. By Lemma \ref{subsequenceofsupport2}, $|\Sigma(g^2(t_2g)^3)|\ge 11$ and $|\Sigma(g^3(t_3g)^2)|\ge 11$.

We first show that $r_2 \le 11$ and $r_3 \le 5$.

If $r_2 \ge 12$, then we can write $$S=T_1T_2T_3T_4U,$$ where $T_1=T_2=T_3=T_4= g^2(t_2g)^3$ and $|\supp(U)|\ge 2$.
By Lemma \ref{subsequenceofsupport3}, there exists $a \in \{1, t_2, t_3\}$ such that $|\Sigma(U(ag)^{-1})| \ge 2|U(ag)^{-1}|-1$. By Lemma \ref{partitionofzerosumfreesequence}, we infer that $|\Sigma(S(ag)^{-1})| \ge  \sum_{i=1}^{4}|\Sigma(T_i)|  + |\Sigma(U(ag)^{-1})| \ge 2(|S|-1)+4 -1\ge p$, yielding a contradiction to Lemma \ref{sumsetofunsplittable}. Hence we may assume that $r_2 \le 11.$ Since $p \ge 100$, then $r_1 \ge 11$. If $r_2 \ge r_3 \ge 6$, then we can write $$S=T_1T_2U_1U_2V,$$ where $T_1=T_2= g^2(t_2g)^3, \, U_1=U_2 = g^3(t_3g)^2 $ and $|\supp(V)|\ge 2$. By Lemma \ref{subsequenceofsupport3}, there exists $a \in \{1, t_2, t_3\}$ such that $|\Sigma(V(ag)^{-1})| \ge 2|V(ag)^{-1}|-1$. Also by Lemma \ref{partitionofzerosumfreesequence}, we infer that $|\Sigma(S(ag)^{-1})|\ge p$, yielding a contradiction. Hence $r_3 \le 5$.

Since $r_1= |S|- r_2 -r_3 = \frac{p-1-2r_2-2r_3}{2},$ by Lemma \ref{coefficient}, $t_3 \ge r_1+2 = \frac{p+3-2r_2-2r_3}{2}$. Since $S$ is a minimal zero-sum sequence, we have $t_3 \le p-r_1-1= \frac{p-1+2r_2+2r_3}{2}$. Now assume that $t_3=\frac{p+x}{2}$, then $3-2r_2-2r_3 \le x \le 2r_2+2r_3-1 $. Hence $-29 \le x \le 31$.

Since $S$ is a zero-sum sequence, we have
\begin{equation*}\label{equ1}
r_1+t_2r_2+t_3r_3= \frac{p-1-2r_2-2r_3}{2}+ \frac{p+3}{2}r_2+  \frac{p+x}{2}r_3 \equiv 0 \pmod p.
\end{equation*}
Since $p$ is an odd prime, we have $p-1-2r_2-2r_3+pr_2+3r_2+pr_3+xr_3 \equiv 0 \pmod p$. Hence $$r_2+(x-2)r_3-1 \equiv 0 \pmod p.$$
Recalling that  $S$ is a minimal zero-sum sequence, $p > 155$, $r_2 \le 11$, $r_3 \le 5$ and $-29 \le x \le 31$, it is easy to check that
\begin{center}
$r_2=4, r_3=1$ and $t_3= \frac{p-1}{2}$.
\end{center}
We are done. This completes the proof.
\end{proof}

\medskip

\begin{lemma}\label{index}
Suppose $g \in G \setminus \{0\}$ and  $S$ if one of the following forms \begin{center} $g^{\frac{p-11}{2}}(\frac{p+3}{2}g)^4(\frac{p-1}{2}g)$ or $g^{\frac{p-7}{2}}(\frac{p+5}{2}g)^2(\frac{p-3}{2}g)$.\end{center} Then $\ind(S)=2$.
\end{lemma}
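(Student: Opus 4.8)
The plan is to evaluate $\ind(S)$ straight from its definition. Since $|G|=p$ is prime, every non-zero element of $G$ generates $G$, and $g\in\supp(S)$ generates $G$; hence $\langle\supp(S)\rangle=G$ and $\ind(S)=\min\{\,\|S\|_h : h\in G\setminus\{0\}\,\}$. Because $S$ is a zero-sum sequence, $\|S\|_h$ is a positive integer for every such $h$. So it is enough to establish (a) some generator $h$ has $\|S\|_h=2$, and (b) no generator $h$ has $\|S\|_h=1$.

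For (a) I would use the generator $h=\tfrac{p+1}{2}g$, i.e.\ the element with $2h=g$. Writing the terms of $S$ in the form $(xh)$ with $x\in[1,p]$: in the first case $g=2h$, $\tfrac{p+3}{2}g=(p+3)h$ reduces to $3h$, and $\tfrac{p-1}{2}g=(p-1)h$, so $\|S\|_h=\tfrac1p\bigl(2\cdot\tfrac{p-11}{2}+4\cdot3+(p-1)\bigr)=\tfrac1p(2p)=2$; in the second case $g=2h$, $\tfrac{p+5}{2}g$ reduces to $5h$, $\tfrac{p-3}{2}g=(p-3)h$, so $\|S\|_h=\tfrac1p\bigl(2\cdot\tfrac{p-7}{2}+2\cdot5+(p-3)\bigr)=2$. (For the second sequence even $h=g$ works, as $\|S\|_g=\tfrac1p\bigl(\tfrac{p-7}{2}+2\cdot\tfrac{p+5}{2}+\tfrac{p-3}{2}\bigr)=2$.)

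For (b), fix a generator $h$ and write $g=ch$ with $c\in[1,p-1]$; as $c$ runs through $[1,p-1]$, the element $h=c^{-1}g$ runs through all generators. In the form $(xh)$ with $x\in[1,p]$, $S$ then has the term $(ch)$ with multiplicity $\tfrac{p-11}{2}$ (resp.\ $\tfrac{p-7}{2}$), a term arising from $\tfrac{p+3}{2}g$ (resp.\ $\tfrac{p+5}{2}g$) with multiplicity $4$ (resp.\ $2$), and one term arising from $\tfrac{p-1}{2}g$ (resp.\ $\tfrac{p-3}{2}g$). Since the reductions into $[1,p]$ of the latter two terms are each at least $1$, we get $p\,\|S\|_h\ge\tfrac{p-11}{2}c+5$ (resp.\ $\ge\tfrac{p-7}{2}c+3$). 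Using $p>155$, the right-hand side is $>p$ as soon as $c\ge3$, and since $p\,\|S\|_h$ is a positive multiple of $p$ this forces $\|S\|_h\ge2$ whenever $c\ge3$. The only remaining values $c=1$ and $c=2$ correspond to the generators $g$ and $\tfrac{p+1}{2}g$ already treated in (a), where $\|S\|_h\in\{2,3\}$. Hence $\|S\|_h\ge2$ for every generator $h$, and combined with (a) this gives $\ind(S)=2$.

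The computations are elementary and I do not anticipate a genuine obstacle; the only point worth spelling out is the estimate in (b) showing that the crude bound $p\,\|S\|_h\ge\tfrac{p-11}{2}c+5$ (and its analogue) already exceeds $p$ for all $c\ge3$ once $p$ is large, which is precisely what cuts the check down to the two explicit generators $g$ and $\tfrac{p+1}{2}g$.
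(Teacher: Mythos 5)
Your proposal is correct and follows essentially the same route as the paper's proof: the paper likewise reduces to checking $\|S\|_h\ge 2$ for every generator $h$ with $g=mh$ and observing that $g=2h$ gives $\|S\|_h=2$. The only difference is that you actually supply the verification the paper dismisses as ``easy to check,'' via the clean bound $p\,\|S\|_h\ge\frac{p-11}{2}c+5$ (resp.\ $\frac{p-7}{2}c+3$) for $c\ge 3$, which correctly cuts the check down to $c\in\{1,2\}$.
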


\begin{proof}
Suppose $h \in G \setminus \{0\}$, then $g=mh$ for some $m\in [1,p-1]$. If $S=g^{\frac{p-11}{2}}(\frac{p+3}{2}g)^4(\frac{p-1}{2}g)$, it is easy to check that $\|S\|_h \ge 2$ and if $g=2h$, then $\|S\|_h = 2$. Hence $\ind(S)=2$. Similarly, we can show that if $S=g^{\frac{p-7}{2}}(\frac{p+5}{2}g)^2(\frac{p-3}{2}g)$, then  $\ind(S)=2$.
\end{proof}

Now we are in a position to proof the main results.

\medskip

\noindent{\it Proof of Theorem \ref{mainresult1}:} Suppose $S$ is an unsplittable minimal zero-sum sequence of length $\frac{p-1}{2}$. By Lemma \ref{support4}, we have $|\supp(S)|=3$. Then by Lemma~ \ref{support3}, $S=g^{\frac{p-11}{2}}(\frac{p+3}{2}g)^4(\frac{p-1}{2}g)$ or $g^{\frac{p-7}{2}}(\frac{p+5}{2}g)^2(\frac{p-3}{2}g)$. By Lemma \ref{index}, we have $\ind(S)=2$. This completes the proof. \qed

\medskip

\noindent{\it Proof of Theorem \ref{mainresult2}:} If $|T| \ge \frac{p+3}{2}$, by Theorem \ref{theorem of sc and yuan}, $\ind(T)=1$.

Next assume that  $|T|= \frac{p+1}{2}$. If  $T$ is unsplittable, by Theorem \ref{theorem of XY2010}.1, we have $\ind(T)=2$. If $T$ is splittable, i.e., there exists $h\in \supp(T)$ and  $x,y \in G$ such that $h=x+y$ and $T'=xyTh^{-1}$ is also a minimal zero-sum sequence of length $\frac{p+3}{2}.$ Then by Theorem~\ref{theorem of sc and yuan}, $\ind(T')=1$. Clearly $\|T\|_g \le \|T'\|_g$ for every $g \in G \setminus \{0\}$. Hence $\ind(T) \le \ind(T')=1$.

If $|T|= \frac{p-1}{2}$, similar to above we can show that $\ind(T) \le 2$. This completes the proof. \qed

\section{Concluding remarks}

Let $p$ be a prime and let $G$ be a cyclic group of order $p$. When $ p < 155$, it is not hard to characterize the structure of  unsplittable minimal zero-sum sequence of length $|S|=\frac{p-1}{2}$. Similar to  Theorems \ref{mainresult1} and \ref{mainresult2}, we can show that

\begin{theorem}\label{4.1}
Let $p > 200$ be a prime and let $G$ be a cyclic group of order $p$. Let $S$ be an unsplittable minimal zero-sum sequence of length $|S|=\frac{p-3}{2}$ over  $G$. We have $S$ is one of the following forms: \begin{center}
$g^{\frac{p-17}{2}}(\frac{p+3}{2}g)^6(\frac{p-1}{2}g)$ or $g^{\frac{p-9}{2}}(\frac{p+7}{2}g)^2(\frac{p-5}{2}g)$.
\end{center}
\end{theorem}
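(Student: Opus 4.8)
The plan is to mirror, step for step, the proof of Theorem~\ref{mainresult1}, replacing the length $\frac{p-1}{2}$ by $\frac{p-3}{2}$ throughout Lemmas~\ref{support}--\ref{support3} and re-running the bookkeeping with enlarged constants. So one proves in turn: (i) $3 \le |\supp(S)| \le 4$; (ii) $|\supp(S)| = 3$; (iii) $S$ is not of the shape $g^{r_1}(\frac{p-1}{2}g)^{r_2}(\frac{p+3}{2}g)^{r_3}$; and (iv) the two admissible forms; then one checks the converse via Lemma~\ref{sumsetofunsplittable}.

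For step (i), the lower bound is again Lemma~\ref{supportoftwo}. For the upper bound, write $S = TU$ with $T = g_1 \cdots g_k$ a $k$-subset of $\supp(S)$, so $|\Sigma(T)| \ge \min\{p, \binom{k+1}{2}\}$ by Lemma~\ref{zerosumfreesubset}. If $|\supp(U)| \ge 2$, Lemma~\ref{subsequenceofsupport3} together with Lemma~\ref{partitionofzerosumfreesequence} gives $|\Sigma(Sg^{-1})| \ge \binom{k+1}{2} + 2\bigl(\frac{p-3}{2} - k - 1\bigr) - 1$, which exceeds $p$ once $k \ge 6$, contradicting Lemma~\ref{sumsetofunsplittable}. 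If $|\supp(U)| = 1$ one is reduced to $S = g^{r_1}(t_2g)\cdots(t_{k+1}g)$ with $r_1 = \frac{p-3}{2} - k$; choosing a non-exceptional triple $g^{r_1}(t_ig)(t_jg)$ and using Lemma~\ref{subsequenceofsupport4} ($|\Sigma| \ge 2(r_1+2)$) together with one or two further heavy pairs (each contributing at least $3$ via Lemma~\ref{subsequenceofsupport1}) forces $|\Sigma(Sg^{-1})| > p-1$. The one delicate point is $k = 5$ with $|\supp(U)| \ge 2$: here the crude count gives only $|\Sigma(Sg^{-1})| \ge p-1$, so one must upgrade the estimate by an extra unit, which is possible because $|\supp(S)| = 5$ always leaves room to pick a triple $g_1^m g_i g_j$ inside $S$ avoiding the single exceptional configuration of Lemma~\ref{subsequenceofsupport2} or~\ref{subsequenceofsupport4}, thereby gaining the needed $+1$.

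Steps (ii)--(iv) follow the same template as Lemmas~\ref{multiplicity3}--\ref{support3}. Writing $S = g^{r_1}(t_2g)^{r_2}\cdots$, Lemma~\ref{coefficient} confines the heavy coefficients to a window around $\frac{p}{2}$; packing blocks of the form $g^2(t_ig)(t_jg)$, $g^2(t_ig)^3$, $g^3(t_ig)^2$ and invoking Lemmas~\ref{subsequenceofsupport2}, \ref{subsequenceofsupport3}, \ref{subsequenceofsupport4}, \ref{partitionofzerosumfreesequence} bounds the small multiplicities $r_2, r_3, r_4$ by explicit constants; the fact that $g^{r_1}(t_ig)^m$ contains a zero-sum subsequence for moderate $m$ (since $r_1 \approx \frac{p}{2}$) pushes these multiplicities down further; and the zero-sum condition becomes a congruence of the shape $(x_2-2)r_2 + x_3 + x_4 - c \equiv 0$ (support $4$) or $r_2 + (x-2)r_3 - c \equiv 0 \pmod p$ (support $3$) with all quantities bounded. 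After discarding the shape in step (iii) by the verbatim analog of Lemma~\ref{r_2_r_3}, this congruence has exactly the two solutions $g^{\frac{p-17}{2}}(\frac{p+3}{2}g)^6(\frac{p-1}{2}g)$ and $g^{\frac{p-9}{2}}(\frac{p+7}{2}g)^2(\frac{p-5}{2}g)$. For the converse, computing $|\Sigma(Sh^{-1})| = p-1$ for each $h \in \supp(S)$ and applying Lemma~\ref{sumsetofunsplittable} shows both are unsplittable (and $\ind(S) = 2$ follows as in Lemma~\ref{index} by passing to the generator $2h$).

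The main obstacle is entirely quantitative. Because $\frac{p-3}{2}$ is one shorter than $\frac{p-1}{2}$, every inequality of the form ``$|\Sigma(Sg^{-1})| \ge (\text{bound}) > p$'' loses two units, so the thresholds on the small multiplicities all grow — roughly $r_2 \le 7 \rightsquigarrow r_2 \le 17$, $r_3 \le 5 \rightsquigarrow r_3 \le 6$ or $7$, and similarly in the support-$4$ analysis — which is precisely why the hypothesis is strengthened to $p > 200$. Consequently one must (a) take more copies of each packing block to regain the contradiction, (b) handle the new borderline sub-cases that the crude count no longer kills outright (e.g.\ $r_4 = 2$ in the support-$4$ analysis, and $k = 5$ above, both of which need the ``$+1$'' refinement), and (c) verify that the wider residue ranges for the $x_i$ still leave the final congruences with only the two claimed solutions. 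None of this is conceptually new, but it is where all the care goes.
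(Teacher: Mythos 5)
Your proposal is essentially the paper's own approach: the authors give no separate argument for Theorem~\ref{4.1}, stating only that it follows ``similar to Theorems~\ref{mainresult1} and~\ref{mainresult2},'' and your plan is exactly that adaptation, correctly locating where the loss of one in the length forces larger multiplicity thresholds, the strengthened hypothesis $p>200$, and the new borderline cases (e.g.\ the $|\supp(S)|=5$ and $r_4=2$ configurations) that need a ``$+1$'' refinement of the counting lemmas. Since the paper supplies no further detail, your roadmap is as complete as the source, and the two claimed forms do check out (correct length and zero sum).
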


\begin{theorem}\label{4.2}
Let $p > 200$ be a prime and let $G$ be a cyclic group of order $p$. Let $T$ be a minimal zero-sum sequence of length $|T|\ge \mathsf{I}(G)-3 = \frac{p-3}{2}$ over  $G$. We have $\ind(T)\le 2$.
\end{theorem}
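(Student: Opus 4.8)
The plan is to copy the proof of Theorem~\ref{mainresult2} almost verbatim, with Theorem~\ref{4.1} taking over the role played there by Theorem~\ref{mainresult1}, and to descend on the length $|T|$. If $|T|\ge\frac{p-1}{2}$ then $\ind(T)\le 2$ is precisely Theorem~\ref{mainresult2}, so we may assume $|T|=\frac{p-3}{2}=\mathsf I(G)-3$. If $T$ is splittable, pick $h\in\supp(T)$ and nonzero $x,y\in G$ with $h=x+y$ such that $T':=xy\,Th^{-1}$ is again a minimal zero-sum sequence; then $|T'|=\frac{p-1}{2}$, so $\ind(T')\le 2$ by Theorem~\ref{mainresult2}. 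Writing $h=ag$, $x=bg$, $y=cg$ with $a,b,c\in[1,p-1]$, the congruence $b+c\equiv a\pmod p$ forces $b+c\in\{a,a+p\}$, hence $\|T'\|_g-\|T\|_g=\frac{(b+c)-a}{p}\ge 0$ for every generator $g$ of $G$; therefore $\ind(T)\le\ind(T')\le 2$.

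If $T$ is unsplittable, then by Theorem~\ref{4.1} there is a generator $g$ with $T=g^{\frac{p-17}{2}}(\frac{p+3}{2}g)^6(\frac{p-1}{2}g)$ or $T=g^{\frac{p-9}{2}}(\frac{p+7}{2}g)^2(\frac{p-5}{2}g)$, and it remains only to compute $\ind(T)$ for these two sequences, exactly as in Lemma~\ref{index}. For an arbitrary generator $h$ one writes $g=mh$, reduces the coefficients mod $p$, and checks by a short case analysis that $\|T\|_h\ge 2$ always, while $\|T\|_h=2$ for a suitable $h$: for the first sequence take $g=2h$, so that $T=(2h)^{\frac{p-17}{2}}(3h)^6((p-1)h)$ and $\|T\|_h=\frac{(p-17)+18+(p-1)}{p}=2$; for the second, $h=g$ already gives $\|T\|_g=2$. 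Hence $\ind(T)=2$, and in every case $\ind(T)\le 2$, as claimed.

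Granting Theorem~\ref{4.1}, the argument above is routine; the genuine work lies in Theorem~\ref{4.1} itself, which must be obtained by re-running the chain of Lemmas~\ref{support}--\ref{support3} with $|S|=\frac{p-3}{2}$ in place of $\frac{p-1}{2}$. Lemma~\ref{supportoftwo} still gives $|\supp(S)|\ge 3$, but the counting of Lemma~\ref{support} now only yields $|\supp(S)|\le 5$: writing $S=T_0U$ with $T_0$ the product of the $k:=|\supp(S)|$ distinct values and using Lemmas~\ref{zerosumfreesubset}, \ref{subsequenceofsupport3}, \ref{partitionofzerosumfreesequence}, \ref{sumsetofunsplittable}, one gets $|\Sigma(Sg_a^{-1})|\ge\binom{k+1}{2}+2\big(\frac{p-3}{2}-k-1\big)-1$, which exceeds $p$ only once $k\ge 6$, and when $|\supp(U)|=1$ (so $U=g^{r_1-1}$) one argues as in Lemma~\ref{support} via Lemma~\ref{coefficient} and minimality, with the one caveat that a handful of congruence-feasible tuples now survive and must be discarded by a direct $|\Sigma(\cdot)|$ computation or by exhibiting a splitting. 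One then eliminates $|\supp(S)|\in\{4,5\}$ by the block decomposition of Lemma~\ref{support4} — split $S$ into $3$-subsets, blocks $g^2h^2$, and tails $g^k(xg)^2$ or $g_1^kg_2g_3$, estimated by Lemmas~\ref{subsequenceofsupport1}, \ref{subsequenceofsupport2}, \ref{subsequenceofsupport4} together with the analogue of Lemma~\ref{multiplicity3} — which bounds every nontrivial multiplicity by an absolute constant and leaves $|\supp(S)|=3$. Finally, for $S=g^{r_1}(t_2g)^{r_2}(t_3g)^{r_3}$ with $t_i=\frac{p+x_i}{2}$, the zero-sum condition becomes one congruence of the shape $r_2+(x_3-2)r_3+\mathrm{const}\equiv 0\pmod p$, to be solved under the multiplicity bounds. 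The main new difficulty — and the reason the threshold rises to $p>200$ — is that $r_1=\frac{p-3}{2}-r_2-r_3$ is smaller by one than before, so the admissible window $[r_1+2,\,p-r_1-1]$ for $t_2,t_3$, hence the range of the $x_i$, is wider and all the estimates are correspondingly weaker; one must track the weakest of them to pin down the exact constant. The two surviving solutions are the sequences named in Theorem~\ref{4.1}, whose unsplittability follows from Lemma~\ref{sumsetofunsplittable} exactly as in Lemma~\ref{support3}. Thus the obstacle is not conceptual but the sheer length of this case analysis.
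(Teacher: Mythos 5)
Your proposal is correct and follows essentially the same route as the paper: the paper proves Theorem~\ref{4.2} exactly as it proves Theorem~\ref{mainresult2}, namely by invoking the previously established case $|T|\ge\frac{p-1}{2}$, lifting a splittable $T$ of length $\frac{p-3}{2}$ to a minimal zero-sum sequence $T'$ of length $\frac{p-1}{2}$ with $\|T\|_g\le\|T'\|_g$, and handling the unsplittable case via the structural classification of Theorem~\ref{4.1} together with a direct index computation as in Lemma~\ref{index}. Your accompanying sketch of how Theorem~\ref{4.1} itself would be re-derived is consistent with the paper's chain of Lemmas~\ref{support}--\ref{support3}, so no gap remains modulo that stated theorem.
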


\begin{definition}\label{4.3}\quad
\begin{itemize}
\item[1.] Let $n$ be an integer.  $\mathsf{I}(n)$ denotes the maximal value of index of minimal zero-sum sequences $S$ over a cyclic group $G$ of order $n$.

\item[2.] Let $G$ be a finite cyclic group and   $k \ge 1$ be an integer.  $\mathsf{I}_k(G)$ denotes the smallest integer $l \in \N$ such that every minimal zero-sum sequence $S$ of length $|S| \ge l$ has $\ind(S)\le k$.
\end{itemize}
\end{definition}

To determine  $\mathsf{I}(n)$ is proposed by Gao \cite{Gao00}, and he conjectured that $\mathsf{I}(n) \le c\ln n$ for some  absolute constant $c$ \cite[Conjecture 4.2]{Gao00}. If $n \equiv 0 \pmod 8$, let $G$ be a cyclic group of order $n$. Suppose
\begin{center}
$S=g^{\frac{n}{4}} (\frac{n}{2}g)((1+\frac{n}{2})g)^{\frac{n}{4}}$.
\end{center}
Then $\ind(S)=\frac{n}{8}+1$. Hence the conjecture of Gao is not true for $n \equiv 0 \pmod 8$. In fact, the conjecture is also not true for every even $n$ (see Theorem \ref{theorem of XY2010}.2).

Let $G$ be a finite cyclic group of order $n$. Clearly, if $ k \ge \mathsf{I}(n)$, then $\mathsf{I}_k(G)=1$. If $k=1$, then $\mathsf{I}_1(G)=\mathsf{I}(G)$.   By Theorem \ref{4.2}, we infer that $\mathsf{I}_2(G) \le \frac{p-3}{2},$ provided that $n=p$ is prime.

{\bf Problem.} Determine  $\mathsf{I}(n)$ for all integers $n$ and determine $\mathsf{I}_k(G)$  for all the cyclic groups $G$.

\bigskip

\noindent {\bf Acknowledgements}.  This work has been supported by  the National Science Foundation of China (Grant Nos. 11271207 and 11301531) and a research grant from  Civil Aviation University of China (No. 2010QD02X).

\end{document}